\numberwithin{equation}{section}
\newtheorem{theorem}{Theorem}[section]
\newtheorem{proposition}[subsection]{\bf Proposition}
\newtheorem{conj}{Conjecture}
\newtheorem{lemma}[subsection]{{\bf Lemma}}
\newtheorem{definition}[subsection]{Definition}
\newtheorem{remark}[subsection]{Remark}
\newcommand{\al}{\alpha}
\newcommand{\be}{\beta}
\newcommand{\ga}{\gamma}
\newcommand{\Z}{\mbox{$\mathbb Z$}}
\newcommand{\Q}{\mbox{$\mathbb Q$}}
\begin{document}

\title[On the resolution of the Diophantine equation $U_n + U_m = x^q$]{On the resolution of the Diophantine equation $U_n + U_m = x^q$} 

\author[P. K. Bhoi]{P. K.  Bhoi}
\address{Pritam Kumar Bhoi, Department of Mathematics, National Institute of Technology Rourkela-769008, India}
\email{pritam.bhoi@gmail.com}

\author[S. S. Rout]{S. S. Rout}
\address{Sudhansu Sekhar Rout, Department of Mathematics,
National Institute of Technology Calicut - 673 601, Kozhikode,  India.}
\email{sudhansu@nitc.ac.in; lbs.sudhansu@gmail.com}

\author[G. K. Panda]{G. K. Panda}
\address{Gopal Krishna Panda, Department of Mathematics, National Institute of Technology Rourkela-769008, India}
\email{gkpanda\_nit@rediffmail.com}

\thanks{2020 Mathematics Subject Classification: Primary 11B39, Secondary 11D61, 11J86. \\
Keywords: Binary recurrence sequence, Lucas sequences, Diophantine equation, Linear forms in logarithms}
\maketitle
\pagenumbering{arabic}
\pagestyle{headings}

\begin{abstract}
Suppose that $(U_{n})_{n \geq 0}$ is a binary recurrence sequence  and has a dominant root $\alpha$ with $\alpha>1$ and the discriminant $D$ is square-free. In this paper, we study the Diophantine equation $U_n + U_m = x^q$ in integers $n \geq m \geq 0$, $x \geq 2$, and $q \geq 2$. Firstly, we show that there are only finitely many of them for a fixed $x$ using linear forms in logarithms. Secondly, we show that there are only finitely many solutions in $(n, m, x, q)$ with $q, x\geq 2$ under the assumption of the {\em abc-conjecture}.  To prove this, we use several classical results like Schmidt subspace theorem, a fundamental theorem on linear equations in $S$-units and Siegel's theorem concerning the finiteness of the number of solutions of a hyperelliptic equation.
\end{abstract}

\section{Introduction}
The sequence $(U_{n})_{n \geq 0} = U_{n}(P, Q, U_{0}, U_{1})$ is called a binary linear recurrence sequence if the relation
\begin{equation}\label{eq4}
U_{n} = PU_{n-1} + QU_{n-2} \;\;(n\geq 2)
\end{equation}
holds, where $P, Q\in \mathbb{Z}$ with $PQ\neq 0$ and $U_{0}, U_{1}$ are fixed rational integers with $|U_{0}| + |U_{1}| > 0$.  The polynomial $f(x) = x^2 -Px -Q$ is called the companion polynomial of the sequence $(U_{n})_{n \geq 0}$. Let $D= P^2 + 4Q$ be the discriminant of $f$. We also call $D$ the discriminant of the sequence $(U_{n})_{n \geq 0}$. If the roots of the sequence of the companion polynomial are denoted by $\alpha$ and $\beta$, then for $n\geq 0$,
\begin{equation}\label{neweq5}
U_{n} = \frac{a_1\alpha^n - a_2 \beta^n}{\alpha-\beta} \quad (\alpha \neq \beta),
\end{equation}
where 
\begin{equation}\label{neweq5a}
a_1 = U_{1} - U_{0}\beta,\, a_2 = U_1- U_{0}\alpha.
\end{equation}
The sequence $(U_{n})_{n\geq 0}$ is called non-degenerate if $a_1a_2\al\be \neq 0$ and $\al/\be$ is not a root of unity. Throughout this paper, we assume that $(U_{n})_{n\geq 0}$ is non-degenerate, and the dominant root $\alpha$ is larger than one. We label the roots in such a way that $\alpha > |\beta|$ (that is $\alpha$ and $\beta$ are real and unequal). 

The Fibonacci sequence $(F_{n})_{n\geq 0}$ is an example of a non-degenerate binary recurrence sequence corresponding to $(P, Q)=(1, 1)$ and initial conditions $F_{0}=0$ and $F_{1}=1$. Recently, the Diophantine equation
\begin{equation}\label{eq1fib}
F_n \pm F_m = y^q,
\end{equation}
 where $n\geq m \geq 0, y\geq 2$ and $q\geq 2$ has been studied by a number of authors. For example, Luca and Patel \cite{lp} proved that if $n\equiv m \pmod 2$, then either $n\leq 36$ or $y =0$ and $n=m$. This problem is still open for $n \not \equiv m \pmod 2$. Using the {\em abc-conjecture}, Kebli et al. \cite{kkll} proved that there are only finitely many integer solutions $(n, m, y, q)$ of \eqref{eq1fib} with $y, q\geq  2$.  Pink and Ziegler \cite{Pink2016} computed all the nonnegative integer solutions $(n, m, z_1, \ldots, z_{46})$ of 
\[F_n + F_m = 2^{z_1} 3^{z_2}\cdots 199^{z_{46}}.\]  
Zhang and Togb\'e \cite{zt} studied a similar type of the Diophantine equation 
\[F_n^p \pm F_m^p =y^q,\] 
in positive integers $(n, m, y, p, q)$ with $q\geq 2 \; \hbox{and}  \;\gcd(F_n, F_m) = 1$. 

The Pell sequence $(P_{n})_{n\geq 0}$ is another example of non-degenerate binary recurrence sequence with $(P, Q)=(2, 1)$ and initial conditions $P_{0}=0$ and $P_{1}=1$.   Recently, Aboudja et al., \cite{ahrt} studied the Diophantine equation
\begin{equation}\label{eq1pell}
P_n \pm P_m = y^q,
\end{equation}
where $P_k$ is the $k$-th term of the Pell sequence. In particular, they find all solutions of \eqref{eq1pell} in positive integers $(n, m, y, q)$ under the assumption $n\equiv m \pmod 2$. Further, the Diophantine equations of the form
 \begin{equation}\label{eq9d}
U_n = x^{q} + R(x), 
\end{equation}
where $R(x)\in\Z[x]$ with degree $\deg R$ and height $H(R)$, has been studied by several authors. In this direction, Nemes and Peth\"{o} \cite{np84} showed that if $|Q|=1, H(R)<H$ and $\deg R \leq \min\{q(1-\gamma), q-3\}$, where $H$ and $\gamma<1$ are positive real numbers, then all integer solutions $(n, x, q)$ of \eqref{eq9d} with $ |x|>1, q\geq 2$ satisfy $\max\{n, |x|, q\}< c$, where $c$ is an effectively computable constant.  Nemes and Peth\"{o} \cite{np86} provided a necessary condition under which  \eqref{eq9d} has infinitely many solutions. In particular, they characterized $R(x)$ in terms of the Chebyshev polynomials. Furthermore, Peth\"{o} \cite{petho86} described solutions of  \eqref{eq9d} in $n$ also.

In this paper, we prove two theorems which generalize \eqref{eq1fib} and \eqref{eq1pell}. In particular, we study the Diophantine equation
 \begin{equation}\label{eq9a}
U_n + U_m = x^{q} \quad  \hbox{where} \quad n \geq m \geq 0, \quad x \geq 2, \quad q \geq 2, 
\end{equation}
and $U_k$ is the $k$-th term of a binary recurrence sequence. Several authors studied  the case $x =2$ of \eqref{eq9a}. Bravo and Luca studied the case when $U_{n}$ is the $n$-th Fibonacci number \cite[Theorem 2]{BL2015} and the $n$-th Lucas number \cite[Theorem 2]{Bravo2014}. Certain variations of the Diophantine equation \eqref{eq9a} has also been studied for binary recurrence sequences (see e.g. \cite{mr2019, mr}). Now we state one of our main result.

\begin{theorem}\label{th2}
 Suppose  $(U_{n})_{n \geq 0}$ is  a non-degenerate 
binary recurrence sequence of integers satisfying 
recurrence \eqref{eq4}. We assume that $(U_n)_{n\geq 0}$ has a dominant root $\alpha>1$ with  $\alpha>|\beta|$ and discriminant $D$ is square-free. Suppose $x \geq 2$ is a fixed integer. Then the nonnegative integer  solutions $(n, m, q)$  of the Diophantine equation \eqref{eq9a}  satisfy
\[ \max\{n, m, q\} < C_1 (\log x)^4 \]
whenever $n> C_2$, where $C_1 \;\hbox{and} \;C_2$ are  constants effectively computable in terms of $\al, \beta, U_0$ and $U_1$.
\end{theorem}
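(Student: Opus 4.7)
The plan is to apply Baker's theory of linear forms in logarithms, in the sharp form of Matveev, in two successive rounds, exploiting the Binet-like formula $U_k=(a_1\alpha^k-a_2\beta^k)/(\alpha-\beta)$ and the bound $|U_k-a_1\alpha^k/(\alpha-\beta)|\le c\,|\beta|^k$ provided by the sub-dominant root. Comparing dominant terms in \eqref{eq9a} gives $x^q\asymp\alpha^n$, which already furnishes the crude inequality $q\le C_0\, n/\log x$; hence a bound on $n$ of the claimed shape will automatically control $q$, and $m\le n$ is trivial.

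For the first round, rewriting $U_n+U_m=x^q$ as $a_1\alpha^n/(\alpha-\beta)-x^q=-U_m+O(|\beta|^n)$ and dividing by $x^q\asymp\alpha^n$ yields, after the standard estimate $|\log(1+y)|\le 2|y|$,
\[|\Lambda_1|:=\bigl|n\log\alpha-q\log x+\log(a_1/(\alpha-\beta))\bigr|\ll \max\bigl((|\beta|/\alpha)^n,\,\alpha^{-(n-m)}\bigr).\]
The form $\Lambda_1$ is a linear combination of three logarithms of algebraic numbers whose (logarithmic) heights are $O(1)$, $\log x$, $O(1)$, so Matveev's theorem yields $|\Lambda_1|>\exp(-C_3\log x\,\log B)$ with $B=\max\{n,q\}=O(n)$. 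Combining the two estimates, we either land in the case where $(|\beta|/\alpha)^n$ dominates and obtain $n\le C_4\log x\log n$ directly, or we are left with the bound $n-m\le C_4\log x\log n$.

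In the second round we keep one more term in the expansion,
\[\frac{a_1\alpha^n(1+\alpha^{m-n})}{\alpha-\beta}-x^q=\frac{a_2(\beta^n+\beta^m)}{\alpha-\beta},\]
whose right-hand side is $\ll L^n$ with $L:=\max(|\beta|,1)<\alpha$. Dividing through by $x^q\asymp\alpha^n$ produces
\[|\Lambda_2|:=\bigl|n\log\alpha-q\log x+\log\bigl(a_1(1+\alpha^{m-n})/(\alpha-\beta)\bigr)\bigr|\ll (L/\alpha)^n.\]
The third algebraic number now has logarithmic height $O((n-m)\log\alpha)$, so Matveev only gives $|\Lambda_2|>\exp(-C_5(n-m)\log x\,\log B)$. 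Inserting the first-round bound $n-m\le C_4\log x\log n$ yields
\[n\log(\alpha/L)<C_6(\log x)^2(\log n)^2,\]
and the standard iteration argument on this implicit inequality (using that the function $n\mapsto n/(\log n)^2$ is eventually increasing) produces $n<C_1(\log x)^4$ for all $n>C_2$, as claimed.

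The main obstacle is precisely the height inflation in $\Lambda_2$: the algebraic number $a_1(1+\alpha^{m-n})/(\alpha-\beta)$ has height growing linearly in $n-m$, which is what forces the two-stage structure---one must first control $n-m$ via $\Lambda_1$ before $\Lambda_2$ can be used profitably. Subsidiary technical points needing attention are the verification that $\Lambda_1,\Lambda_2\neq 0$ (which follows from the non-degeneracy hypothesis, in particular that $\alpha/\beta$ is not a root of unity and $a_1,a_2\neq 0$) and the quantitative treatment of the case $|\beta|\ge 1$, where one must exhibit an effective positive lower bound for $\log(\alpha/L)$ in terms of the fixed parameters $\alpha$ and $\beta$.
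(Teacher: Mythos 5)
Your proposal is correct and follows essentially the same two-stage Matveev argument as the paper: a first linear form in logarithms bounds $n-m \ll \log x \log q$ (this is the paper's Lemma \ref{sec3: lem4}), and a second form involving $1+\alpha^{m-n}$ --- whose height is then under control --- yields $n/(\log n)^2 \ll (\log x)^2$ and hence $n \ll (\log x)^4$ via the Guzman-Sanchez--Luca lemma, with the nonvanishing of both forms checked by conjugating in $\Q(\sqrt{D})$. The only cosmetic differences are that the paper groups $x/\alpha^{k}$ (with $n=qk+r$) as a single algebraic number in the first form so as to take $B=q$ rather than $B=n$, and that it disposes of the case $n=m$ separately via Peth\"{o}'s theorem on perfect powers in recurrences.
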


Secondly, we look at whether  equation \eqref{eq9a} has finitely or infinitely many solutions with $n\geq m\geq 0, x\geq 2$ and $q\geq 2$. Under the assumption of {\em abc-conjecture}, we prove that the Diophantine equation \eqref{eq9a} has only finitely many positive integer solutions $(n, m, x, q)$. Furthermore, we describe the cases when  \eqref{eq9a} may have infinitely many solutions (see Remark \ref{rem1}).

Let $T_n(x)$ denote the Chebyshev polynomial of degree $n$. The classical Chebyshev polynomials $T_n(x)$ satisfy the recurrence relation $T_{n+1}(x) = xT_{n}(x) -T_{n-1}(x)$ for $n\geq 1$ with $T_0(x) = 2, T_1(x) = x$ and we call this sequence as Chebyshev polynomial sequence. Note that under the assumptions $|Q|=1$, $\al > 1$ and non-degeneracy  of $(U_n)_{n\geq 0}$, it is easy to see that discriminant $D>0$ and $D$ is not a square.

\begin{theorem}\label{th3}
Let $(U_{n})_{n \geq 0}$ be a non-degenerate binary recurrence sequence of integers satisfying 
recurrence \eqref{eq4} with $|Q| = 1,  D\nmid a_1a_2$, where  $a_1, a_2$ defined in \eqref{neweq5a}. Further, assume that $(U_{n})_{n \geq 0}$ has a dominant root $\alpha >1$. Then under the assumption of abc-conjecture, the Diophantine equation \eqref{eq9a} has only finitely many nonnegative integer solutions in $(n, m, x, q)$ with $n \geq m, x \geq 2$ and $q \geq 2$.    
  \end{theorem}
\begin{remark}\label{rem1}
Let $U_0 =2, U_1 = P, Q  = \pm 1$, where $P$ is an integer and $P^2+4Q>0$. Then for all $n\in \Z$, $U_{2n} = U_n^2-2(-Q)^n$. For this sequence, we show that \eqref{eq9a} has infinitely many solutions $(n, m, x, q) = (4k, 0, U_{2k}, 2)$. Observe that, in this example, $D\mid a_1a_2$. Therefore, in Theorem \ref{th3} the assumption $D\nmid a_1a_2$ is necessary.

\end{remark}

\section{Auxiliary Results}\label{sec1}
In this section, we recall some existing definitions and results which will be needed in later sections to prove Theorem 1.1 and 1.2.
\begin{definition}
The radical of a positive integer $n$ is the product of the distinct prime numbers dividing $n$, i.e.,
\[\mbox{rad}(n)=\prod_{\substack{
    p\mid n\\
  p\;\; \mbox{prime} 
    }}p.\] 
\end{definition}
In 1980, Masser and Oesterl\'{e} formulated the following {\em abc-conjecture}. 
\begin{conj}[\cite{Masser1985}]\label{conj1}
For any given $\epsilon > 0$, there exists a positive constant $\kappa(\epsilon)$ such that if $a, b,$ and $c$ are coprime integers for which
\[a + b = c,\]
then
\begin{equation}\label{eq2.3}
\max(|a|,|b|, |c|)\leq \kappa(\epsilon)(\mbox{rad}(abc))^{1+\epsilon}.
\end{equation}
\end{conj}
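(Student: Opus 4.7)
The plan is to use the Pell-type identity afforded by $|Q|=1$ to convert \eqref{eq9a} into a hyperelliptic equation to which Siegel's theorem applies, and then to use the abc-conjecture together with the Schmidt subspace theorem and the $S$-unit equation theorem to bound the remaining parameters. Writing $W_n := a_1\alpha^n + a_2\beta^n$ for the companion sequence, the identity
\[
W_n^2 - D\, U_n^2 \;=\; 4\, a_1 a_2\, (-Q)^n
\]
has right-hand side constant up to sign, because $\alpha\beta = -Q = \pm 1$. Substituting $U_n = x^q - U_m$ from \eqref{eq9a} yields
\[
W_n^2 \;=\; D(x^q - U_m)^2 + 4\, a_1 a_2\, \epsilon_n, \qquad \epsilon_n := (-Q)^n \in \{\pm 1\}.
\]

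Fix $m$, the parity of $n$ (so that $\epsilon_n$ is fixed), and $q\geq 2$. Then the displayed equation is a hyperelliptic equation $Y^2 = f_{m,q}(X)$ with $f_{m,q}(X) = D(X^q - U_m)^2 + 4\,a_1 a_2 \,\epsilon_n$ of degree $2q \geq 4$. Over $\overline{\mathbb{Q}}$, the roots of $f_{m,q}$ are the $q$-th roots of the two numbers $U_m \pm 2\sqrt{-a_1 a_2 \epsilon_n/D}$; the hypothesis \eqref{eqindag1} is precisely the statement that neither of these quantities vanishes, so $f_{m,q}$ has $2q$ distinct roots. Siegel's theorem on hyperelliptic equations then yields finitely many integer solutions $(x, W_n)$, and each $W_n$ determines $n$ via the asymptotic $|W_n| \sim |a_1|\,\alpha^n$. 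Hence for each pair $(m,q)$ there are only finitely many $(n, x)$.

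It remains to bound both $m$ and $q$. For $q$: a direct application of the abc-conjecture to \eqref{eq9a} is too weak, since $\mathrm{rad}(U_n)$ is generically comparable to $U_n$. Instead, when $n \equiv m \pmod 2$ I use the Lucas-type identity $U_n + U_m = \tilde V_{(n-m)/2}\, U_{(n+m)/2}$ (valid when $(-Q)^{(n-m)/2} = 1$, with a similar factorization in the other parity subcase) to write $x^q$ as a product of two integers whose gcd is bounded by a quantity depending only on $P, Q, U_0, U_1$. Up to this bounded gcd, one factor is a perfect $q$-th power, and the abc-conjecture applied to the Pell-type identity satisfied by that factor forces $q \leq Q_0$ for an effectively computable $Q_0$. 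For $m$: viewing $\alpha, \beta$ as units in $\mathbb{Q}(\sqrt D)$ (again because $|Q|=1$), the identity
\[
a_1 \alpha^n - a_2 \beta^n + a_1 \alpha^m - a_2 \beta^m \;=\; \sqrt{D}\, x^q
\]
is a five-term linear relation in $S$-units, where $S$ enlarges the set of primes dividing $a_1 a_2 D$ by those of $x$. The Schmidt subspace theorem (or Evertse's theorem applied to the vanishing-subsum cases) forces non-degenerate solutions into finitely many subspaces, giving bounded $n-m$; the degenerate vanishing-subsum cases either reduce to smaller instances or collapse to the family of Remark \ref{rem1}, which \eqref{eqindag1} excludes.

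The main obstacle is the abc-step bounding $q$. A naive application of abc to \eqref{eq9a} fails because $\mathrm{rad}(U_n)$ can be as large as $U_n$, so no nontrivial inequality arises; one must factor $U_n + U_m$ first and track the gcd of the factors for a general initial condition $(U_0, U_1)$, which is more delicate than in the Fibonacci or Pell settings. A secondary difficulty is the $S$-unit step with varying $x$: since the prime set $S$ depends on the unknown $x$, Evertse's fixed-$S$ bound does not apply verbatim, and one must invoke Schmidt subspace in its projective form and argue case-by-case through the vanishing subsums, with the hypothesis \eqref{eqindag1} providing the crucial exclusion of the infinite family of Remark \ref{rem1}.
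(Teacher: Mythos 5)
The statement you were asked to address is Conjecture~\ref{conj1}, i.e.\ the \emph{abc-conjecture} of Masser and Oesterl\'e. This is not a theorem of the paper: it is stated as a conjecture, attributed to \cite{Masser1985}, and the paper offers no proof of it whatsoever --- it is used only as a \emph{hypothesis} in Theorem~\ref{th3} and Propositions~\ref{propo1} and~\ref{propo2}. It remains one of the major open problems in number theory, so no correct elementary proof should be expected here.

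Your proposal does not engage with this statement at all. What you have written is a sketch of the proof of Theorem~\ref{th3} (finiteness of solutions of $U_n+U_m=x^q$ under $|Q|=1$ and condition~\eqref{eqindag1}), assembled from the Pell-type identity $W_n^2-DU_n^2=4a_1a_2(-Q)^n$, Siegel's theorem, the subspace theorem, and $S$-unit equations --- which is indeed close in spirit to the paper's Section~4. But as an argument for Conjecture~\ref{conj1} it is both off-target and circular: you explicitly invoke the abc-conjecture as an ingredient (``the abc-conjecture applied to the Pell-type identity \dots forces $q\le Q_0$''), so the conjecture appears as a premise of the very argument that is supposed to establish it. Nothing in the proposal produces the inequality $\max(|a|,|b|,|c|)\le \kappa(\epsilon)(\mathrm{rad}(abc))^{1+\epsilon}$ for arbitrary coprime $a+b=c$; the special Diophantine equation \eqref{eq9a} gives no leverage on general triples. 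The gap is therefore total: the statement is an open conjecture, the paper does not prove it, and your argument neither could nor attempts to.
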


Let $\eta$ be an algebraic number of degree $d$ with minimal polynomial over $\Z$ given by
\[a_{0}x^d + a_1x^d + \cdots + a_d = a_0 \prod_{i=1}^{d}\left(X - \eta^{(i)}\right)\]
where $a_0>0$ and the
$\eta^{(i)}$'s are the conjugates of $\eta$. The absolute {\it logarithmic height} of $\eta$ is defined as
\[
h(\eta) = \frac{1}{d} \left( \log a_0 + \sum_{i=1}^d \log \max ( 1, |\eta^{(i)}| ) \right). 
\]
In particular, if $\eta = p/q$ is a rational number with $\gcd(p, q) = 1$ and $q >0$, then $h(\eta) = \log \max\{|p|, q\}$.
Some important properties of logarithmic height which we use in the proof of our main results are described in the following lemma.
\begin{lemma}[\cite{Smart1998}] \label{lemheightprop}Let $\eta, \gamma$ be algebraic numbers and $t \in \mathbb{Z}$. Then
\begin{enumerate}[\normalfont(1)]
\item $h(\eta \pm \gamma) \leq h(\eta) + h(\gamma) + \log 2$,
\item $h(\eta\gamma^{\pm 1}) \leq h(\eta) + h(\gamma)$,
\item $h(\eta^{t}) = |t|h(\eta)$.
\end{enumerate}
\end{lemma}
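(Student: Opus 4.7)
The plan is to prove all three inequalities by passing from the minimal-polynomial definition of $h$ to its equivalent place-theoretic form and then estimating place by place. Concretely, I would embed $\eta$ and $\gamma$ in a common number field $K$ of degree $d = [K:\mathbb{Q}]$ and use the identity
\[ h(\xi) = \frac{1}{d}\sum_{v \in M_K} d_v \log^+|\xi|_v, \qquad d_v = [K_v:\mathbb{Q}_v], \]
where $M_K$ is the set of places of $K$ with the standard normalization (satisfying the product formula) and $\log^+ t = \max(\log t, 0)$. This identity follows from the factorization of the minimal polynomial over the archimedean completions together with the product formula at the non-archimedean places; it is equivalent to writing $d \cdot h(\xi)$ as the logarithmic Mahler measure of the minimal polynomial.

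For (3), multiplicativity of every $|\cdot|_v$ gives $|\eta^t|_v = |\eta|_v^{|t|}$, hence $\log^+|\eta^t|_v = |t|\log^+|\eta|_v$; summing over $v$ with weights $d_v/d$ yields $h(\eta^t) = |t| h(\eta)$. The case $t<0$ is reduced to $t>0$ via $h(\xi^{-1}) = h(\xi)$, which is immediate from the product formula applied to $\sum_v d_v \log|\xi|_v = 0$. For (2), multiplicativity gives $\log^+|\eta\gamma|_v \leq \log^+|\eta|_v + \log^+|\gamma|_v$ at every place, and summing produces the claim; the $\gamma^{-1}$ version again reduces via $h(\gamma^{-1}) = h(\gamma)$.

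For (1), I would split $M_K$ into archimedean and non-archimedean places. At a non-archimedean $v$, the ultrametric inequality yields $|\eta\pm\gamma|_v \leq \max(|\eta|_v,|\gamma|_v)$, and hence $\log^+|\eta\pm\gamma|_v \leq \log^+|\eta|_v + \log^+|\gamma|_v$. At an archimedean $v$, the triangle inequality gives $|\eta\pm\gamma|_v \leq 2\max(|\eta|_v,|\gamma|_v)$, hence $\log^+|\eta\pm\gamma|_v \leq \log^+|\eta|_v + \log^+|\gamma|_v + \log 2$. Summing against the normalized weights and using $\sum_{v\mid\infty} d_v/d = 1$ (i.e., the archimedean local degrees add up to the global degree) produces the advertised extra $\log 2$ term and no more.

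The only genuine hurdle is establishing the place-theoretic formula for $h$; once that is in hand, all three items follow from routine absolute-value estimates at each place. Since the lemma is attributed to \cite{Smart1998} in the paper, these are precisely the standard arguments and no novel ingredient is required.
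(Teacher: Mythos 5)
The paper offers no proof of this lemma at all: it is quoted verbatim from Smart's book \cite{Smart1998}, so there is nothing internal to compare against. Your argument is the standard textbook proof — pass to the decomposition $h(\xi)=\frac{1}{d}\sum_{v}d_v\log^{+}|\xi|_v$ over the places of a common number field and estimate place by place, using the ultrametric inequality at finite places and the triangle inequality (costing $\log 2$ only at the archimedean places, whose local degrees sum to $d$) — and it is correct; in fact it gives equality in (3). The only blemish is the line ``$|\eta^{t}|_v=|\eta|_v^{|t|}$,'' which holds only for $t\ge 0$, but you immediately repair this by reducing $t<0$ to $t>0$ via $h(\xi^{-1})=h(\xi)$ from the product formula, so the proof stands.
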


To prove Theorem \ref{th2}, we use lower bounds for linear forms in logarithms to bound the index $n$ appearing in Eq.~\eqref{eq9a}. In particular, we need the following general lower bound for linear forms in logarithms due to Matveev \cite[Theorem 2.2]{Mat2000}.
\begin{theorem}\label{lem:matveev}
Let $\mathbb{L}$  be an algebraic number field and $d_{\mathbb{L}}$, the degree of $\mathbb{L}$. Let $\ga_1,\ldots, \ga_t \in \mathbb{L}$ be neither $0$ nor $1$ and let $b_{1},\ldots, b_{t}$ be nonzero rational integers. Let $A_{j}$ be positive integers such that 
\begin{equation}\label{eq8a}
A_j \geq \max \left\{ d_{\mathbb{L}}h(\ga_j) , |\log \ga_j|, 0.16  \right\}, \quad j= 1, \ldots,t.
\end{equation}
 Assume that $B\geq \max\{|b_1|, \ldots, |b_{t}|\}$ and $\Lambda:=\ga_{1}^{b_1}\cdots\ga_{t}^{b_t} - 1$. If $\Lambda \neq 0$ and $\mathbb{L}\subset \mathbb{R}$, then
\[|\Lambda| \geq \exp \left( -1.4\times 30^{t+3}\times t^{4.5}\times d_{\mathbb{L}}^{2}(1 + \log d_{\mathbb{L}})(1 + \log B)A_{1}\cdots A_{t} \right).\]
\end{theorem}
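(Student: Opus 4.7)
The plan is to prove this explicit lower bound by Baker's method, in the interpolation-determinant form due to Laurent--Mignotte--Nesterenko and refined by Matveev; those refinements are precisely what bring the dependence on the number of logarithms down to $30^{t+3} t^{4.5}$ rather than $t^t$ or worse. I would argue by contradiction: assume $|\Lambda| < \exp(-C_0)$ where $C_0 := 1.4 \cdot 30^{t+3} t^{4.5} d_{\mathbb{L}}^2 (1+\log d_{\mathbb{L}})(1+\log B) A_1 \cdots A_t$ is the target. Because $\mathbb{L} \subset \mathbb{R}$ each $\gamma_j$ is real, so writing $\Lambda = e^L - 1$ with $L = \sum_{j=1}^t b_j \log \gamma_j$ using the principal real logarithm, one has $|L| \leq 2|\Lambda|$ and the task reduces to bounding $|L|$ from below.

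The core will be a double estimate on an auxiliary interpolation determinant. I would choose integer parameters $K_0, K_1, \ldots, K_t$ and set $N = (K_0+1)(K_1+1)\cdots(K_t+1)$, balancing them against $C_0$, $B$ and the $A_j$. Form the $N \times N$ matrix whose rows are indexed by $N$ distinct integer nodes $n_1, \ldots, n_N$ in an arithmetic progression and whose columns are indexed by tuples $\lambda = (\lambda_0, \ldots, \lambda_t)$ with $0 \leq \lambda_i \leq K_i$, with entry $\binom{n_i}{\lambda_0} \gamma_1^{n_i \lambda_1} \cdots \gamma_t^{n_i \lambda_t}$; let $\Delta \in \mathbb{L}$ be its determinant. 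On the analytic side, Taylor-expanding each column about the approximation $e^L \approx 1$ and using $|\Lambda| < \exp(-C_0)$ forces $|\Delta| \leq \exp(-c_1 C_0 N)$ for an explicit $c_1 > 0$. On the arithmetic side, since the logarithmic heights of the entries are controlled by the $A_j$ and the common denominator by $d_{\mathbb{L}}$, Liouville's inequality yields $|\Delta| \geq \exp(-c_2 N (A_1 + \cdots + A_t))$ whenever $\Delta \neq 0$. Comparing the two estimates under the tuned choice of parameters contradicts the assumption $|\Lambda| < \exp(-C_0)$, which proves the theorem.

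The hardest step will be the non-vanishing $\Delta \neq 0$. This demands a zero/multiplicity estimate for the algebraic group $\mathbb{G}_m^t$; Matveev's approach applies a Kummer-descent inside the multiplicative subgroup $\Gamma = \langle \gamma_1, \ldots, \gamma_t \rangle \subset \overline{\mathbb{L}}^{\times}$ to obtain an estimate sharper than Philippon's general bound, at the same time controlling torsion. The slightly odd exponent $t^{4.5}$ emerges from Minkowski-type volume estimates on the lattice of multiplicative relations inside $\Gamma$, while the factor $30^{t+3}$ comes from mechanical (but intricate) optimization of the $K_i$ under the constraint that the interpolation-determinant construction succeeds. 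The hypothesis $\mathbb{L} \subset \mathbb{R}$ is what removes an extra periodic factor attached to the $2\pi i k$ correction terms that would otherwise appear when linearizing $e^L - 1$ in the complex case, giving the clean real-valued bound stated above.
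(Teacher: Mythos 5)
The paper does not prove this statement at all: it is quoted verbatim as Matveev's theorem (Theorem 2.2 of the cited 2000 paper of Matveev) and used as a black box, so there is no in-paper argument to compare yours against. What you have written is a reasonable aerial view of how the actual proof goes --- interpolation determinants in the Laurent--Mignotte--Nesterenko style, a double analytic/arithmetic estimate, and a zero estimate on $\mathbb{G}_m^t$ sharpened by Kummer descent --- but it is a plan, not a proof. Every load-bearing step is deferred: the analytic upper bound $|\Delta|\leq\exp(-c_1C_0N)$ and the Liouville lower bound $|\Delta|\geq\exp(-c_2N(A_1+\cdots+A_t))$ are asserted with unspecified constants, the choice of the parameters $K_i$ that makes the two estimates collide is never exhibited, and the non-vanishing of $\Delta$, which you correctly identify as the hardest point, is simply attributed to a descent argument you do not carry out. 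Since the entire content of the theorem is the explicit numerical constant $1.4\cdot 30^{t+3}t^{4.5}d_{\mathbb{L}}^{2}(1+\log d_{\mathbb{L}})$, an argument that does not track constants through the optimization establishes at best a bound of the same shape with some unspecified constant, i.e.\ a qualitative Baker-type theorem rather than the stated result.

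There is also a technical slip in your reduction: $|L|\leq 2|\Lambda|$ requires that $L=\sum_j b_j\log\gamma_j$ be the determination of the logarithm nearest to $0$, which is automatic only when every $\gamma_j$ is a positive real. The hypotheses only exclude $\gamma_j\in\{0,1\}$ and put $\mathbb{L}\subset\mathbb{R}$, so some $\gamma_j$ may be negative, the principal logarithms then acquire $i\pi$ terms, and $e^L$ close to $1$ only forces $L$ close to some integer multiple of $2\pi i$. This is repaired in the literature by squaring the $\gamma_j$ or by adjoining $2\pi i$ as an extra logarithm, but it must be addressed, and doing so perturbs the constants you are trying to pin down. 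In short: your outline matches the known strategy, but for a result whose statement \emph{is} the constant, the appropriate course --- and the one the paper takes --- is to cite Matveev rather than to rederive him; as submitted, your sketch does not constitute a proof.
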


The following lemma is due to Sanchez and Luca \cite[Lemma 7]{sl}. 
\begin{lemma}[\cite{sl}]\label{lem9}
If $m\geq 1, T> (4m^2)^m$ and $T> \frac{x}{(\log x)^m}$, then
\[x <2^m T (\log T)^m.\]
\end{lemma}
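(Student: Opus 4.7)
The plan is to rewrite the hypothesis $T > x/(\log x)^m$ as
\[ x < T(\log x)^m, \]
and then split into two cases based on the size of $\log x$ relative to $\log T$. If $\log x \leq 2\log T$, the conclusion is immediate, since then
\[ x < T(\log x)^m \leq T(2\log T)^m = 2^m T(\log T)^m. \]
So the real work is to rule out the complementary case $\log x > 2\log T$, by extracting a contradiction from the auxiliary hypothesis $T > (4m^2)^m$.

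In that case, $\log x > 2\log T$ forces $x > T^2$, and plugging this into $x < T(\log x)^m$ gives $T^2 < T(\log x)^m$, hence $T^{1/m} < \log x$. Taking logarithms in $x < T(\log x)^m$ yields $\log x < \log T + m\log\log x$, and combined with $\log T < (\log x)/2$ this reduces to $\log x < 2m\log\log x$, equivalently $(\log x)/\log\log x < 2m$. From $T^{1/m} < \log x < 2m\log\log x$ together with $T^{1/m} > 4m^2$, I obtain $\log\log x > T^{1/m}/(2m) > 2m$, so $\log x > e^{2m}$. The function $u \mapsto u/\log u$ is increasing for $u > e$, and at $u = e^{2m}$ it equals $e^{2m}/(2m)$, which exceeds $2m$ for every $m \geq 1$ because $e^{2m} > 4m^2$. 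Hence $(\log x)/\log\log x > 2m$, contradicting what we just derived.

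The main delicacy I anticipate is simply the calibration of the cutoff: $2\log T$ has to be small enough that the easy case delivers exactly the constant $2^m$ in front of $T(\log T)^m$, and large enough that the contrary case has enough slack to be closed off by $T > (4m^2)^m$. Once this single choice is pinned down, everything else is routine manipulation of logarithms together with the monotonicity of $u/\log u$, and no deeper input is needed.
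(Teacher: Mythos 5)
Your argument is correct. Note first that the paper itself offers no proof of this lemma: it is quoted verbatim from Guzm\'an S\'anchez--Luca \cite{sl} (their Lemma 7), so there is no ``paper's proof'' to match step for step. Your route is a clean, self-contained alternative to the one in that reference, which argues by contradiction from the negation $x \geq 2^m T(\log T)^m$ using the monotonicity of $t/(\log t)^m$. Your two-case split on whether $\log x \leq 2\log T$ is an equally elementary variant: the first case is immediate, and in the second case the chain $T^{1/m} < \log x < 2m\log\log x$ together with $T^{1/m} > 4m^2$ forces $\log\log x > 2m$, after which the monotonicity of $u/\log u$ on $(e,\infty)$ and the inequality $e^{2m} > 4m^2$ (equivalently $e^m > 2m$, valid for all $m \geq 1$) give the contradiction with $(\log x)/\log\log x < 2m$. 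All the auxiliary positivity checks go through: $T > (4m^2)^m \geq 4$ gives $\log T > 0$, and in the second case $T^{1/m} < \log x$ with $T^{1/m} > 4$ guarantees $\log\log x > 0$, so the divisions are legitimate. The only implicit assumption, shared with the original statement, is $x > 1$ so that the hypothesis $T > x/(\log x)^m$ can be rearranged to $x < T(\log x)^m$; this is harmless since for $x \leq 1$ the conclusion holds trivially.
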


The proof of Theorem \ref{th3} also depends on the subspace theorem \cite{schmidt}. Let $K$ be an algebraic number field and $\mathcal{O}_K$ be its ring of integers. Let $M_K$ be the collection of places of $K$. For $v\in M_K, x\in K$, we define the absolute value $|x|_v$ by 
\begin{enumerate}
\item $|x|_v = |\sigma(x)|^{1/[K:\Q]}$ if $v$ corresponds to the embedding $\sigma: K\hookrightarrow \mathbb{R}$,
\item $|x|_v = |\sigma(x)|^{2/[K:\Q]}$ if $v$ corresponds to the pair of complex embeddings $\sigma, \overline{\sigma}: K\hookrightarrow \mathbb{C}$,
\item $|x|_v =(N\mathfrak{p})^{-\mbox{ord}_{\mathfrak{p}}(x)/[K:\Q]}$ if $v$ corresponds to the prime ideal $\mathfrak{p}$ of $\mathcal{O}_K$.
Here $N\mathfrak{p} = \# (\mathcal{O}_K/\mathfrak{p})$ is the norm of $\mathfrak{p}$ and  $\mbox{ord}_{\mathfrak{p}}(x)$ the exponent of $\mathfrak{p}$ in the prime ideal decomposition of $(x)$, with $\mbox{ord}_{\mathfrak{p}}(0) = \infty$.
\end{enumerate}
These absolute values satisfy the product formula 
\begin{equation}\label{eqprodcut}
\prod_{v\in M_K}|x|_v =1 \quad \mbox{for}\quad x\in K\setminus\{0\}.
\end{equation}
Observe that $H(x) = |x|$ for $x\in \mathbb{Z}$. Further, we define
\[H(x) = \prod_{v\in M_K}\max \{1, |x|_v\}\] for $x\in K$ and that 
\[H_{L}(x) = H_K(x)^{[L:K]},\] for $x\in K$ and for a finite extension $L$ of $K$. Let ${\bf x} = (x_1, \ldots, x_n)\in K^n$ with ${\bf x}\neq 0$ and for $v\in M_K$, let
\[|x|_v = \max_{1\leq i\leq n} |x_i|_v.\] Then $H({\bf x})$, the {\em height} of ${\bf x}$ is defined as
\[H({\bf x}) = \prod_{v\in M_K}\max \{1, |x|_v\}.\]
 
 \begin{theorem}[Subspace Theorem]\label{thmsub} Let $K$ be an algebraic number field and let $S\subset M_K$ be a finite set of absolute values which contains all the infinite ones. For $v\in S$, let $L_{1, v}, \ldots, L_{n, v}$ be $n$ linearly independent linear forms in $n$ variables with coefficients in $K$. Let $\delta>0$ be given. Then the solutions of the inequality 
 \begin{equation}
 \prod_{v\in S}\prod_{i=1}^n |L_{i, v}({\bf x})|_v < H({\bf x})^{-\delta}
 \end{equation} 
 with ${\bf x}\in (\mathcal{O}_K)^n$ and ${\bf x}\neq 0$ lie in finitely many proper subspaces of $K^n$.
 \end{theorem}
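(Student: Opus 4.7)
The plan is to follow the strategy initiated by Schmidt (and extended to the $S$-adic setting over number fields by Schlickewei), since the statement is essentially Schmidt's Subspace Theorem. The target is the contrapositive: assume an infinite family of solutions ${\bf x}\in\mathcal{O}_K^n$ lies in no finite union of proper subspaces of $K^n$, and derive a contradiction by an auxiliary polynomial argument driven by the geometry of numbers in the adelic product $\prod_{v\in S} K_v$.

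First, I would normalize and extract a uniform ``type.'' By rescaling each $L_{i,v}$ I may assume the coefficients lie in $\mathcal{O}_K$ and the local heights are standardized. For each putative solution ${\bf x}$ consider the parallelepiped $\Pi({\bf x}) = \{{\bf y}\in K_S^n : |L_{i,v}({\bf y})|_v \leq |L_{i,v}({\bf x})|_v\text{ for all } (i,v)\}$. Applying Minkowski's theorem on successive minima at each $v\in S$, together with an averaging/pigeonhole step (Schmidt's ``parallelepiped lemma''), one passes to an infinite subsequence of solutions ${\bf x}^{(k)}$ with $H({\bf x}^{(k)}) \to \infty$ whose type — the tuple $(c_{i,v})$ defined by $|L_{i,v}({\bf x}^{(k)})|_v \asymp H({\bf x}^{(k)})^{c_{i,v}}$ — converges to a fixed vector satisfying $\sum_{i,v} c_{i,v} < -\delta$.

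Second, pick a large integer $N$ and a rapidly increasing sequence of multidegrees $(d_1, \ldots, d_N)$ tailored to the limit type, and invoke a many-variable Siegel lemma over $\mathcal{O}_K$ to construct an auxiliary polynomial $P({\bf X}_1, \ldots, {\bf X}_N)$ of modest height vanishing to prescribed index at a carefully chosen $N$-tuple $({\bf x}^{(k_1)}, \ldots, {\bf x}^{(k_N)})$ of solutions with widely separated heights. Expanding $P$ in the linear forms $L_{i,v}$ (whose values at the chosen points are tiny by hypothesis) and bounding $|P(\cdots)|_v$ at $v\in S$ by this smallness, while bounding it at $v \notin S$ by the standard height-based estimate, one combines the two via the product formula \eqref{eqprodcut} to obtain an inequality that forces $P(\cdots) = 0$. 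Iterating the same estimate on the partial derivatives of $P$ then forces the index of $P$ at the tuple to be large.

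The main obstacle — and the technical heart of any proof of this theorem — is the upper bound on this index, namely that $P$ \emph{cannot} have index as large as the geometric-of-numbers argument above demands. This is Schmidt's generalization of Roth's lemma (or, in the modern formulation, Faltings' product theorem). It is proved by a delicate induction on the number of variables using Wronskian-type determinants, and crucially demands uniformity across both archimedean and non-archimedean places of $S$; this uniformity is what makes the jump from the one-variable case (Roth's theorem) to the full $S$-adic Subspace Theorem substantially deeper than the initial normalization and polynomial construction suggest.
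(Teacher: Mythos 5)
The paper offers no proof of this statement: it is imported verbatim as a known deep result, with the reference \cite{schmidt}, so there is no in-paper argument to compare yours against. Your proposal is a faithful high-level description of the Schmidt--Schlickewei strategy (contrapositive, normalization and extraction of a fixed approximation type by pigeonhole, auxiliary polynomial of large index via Siegel's lemma, smallness at places of $S$ combined with the product formula \eqref{eqprodcut} to force vanishing of $P$ and its low-order derivatives, contradiction with an index upper bound of Roth--Schmidt type). As a roadmap it is accurate, and you correctly locate the technical heart in the non-vanishing result (Roth's lemma, or Faltings' product theorem in the modern treatment).

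As a proof, however, it has genuine gaps: every load-bearing step is named rather than carried out. Two of these are not mere bookkeeping. First, the index upper bound that you defer is not an isolated lemma one can cite in passing inside a proof of this theorem without circularity of exposition --- its hypotheses require the $N$-tuple of solutions to have rapidly growing, multiplicatively well-separated heights \emph{and} to be in suitable general position, and verifying that such a tuple can be selected is exactly where the standing assumption ``the solutions lie in no finite union of proper subspaces'' must be invoked; your sketch never explains how that hypothesis enters. Second, the conclusion of the theorem is not ``finitely many solutions'' but ``solutions lie in finitely many proper subspaces,'' and the derivation of an actual subspace (via the successive-minima analysis of the parallelepipeds $\Pi({\bf x})$, Mahler's theory of compound bodies, and the penultimate-minimum argument) is a separate, essential component that the sketch omits entirely: obtaining a contradiction from the index bounds only rules out certain configurations of solutions, and one must still exhibit the hyperplanes that capture all of them. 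Until those two steps are supplied, what you have is an annotated table of contents for the proof in \cite{schmidt}, not a proof.
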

 
 Let $S$ be a finite set of places of $K$ containing all the infinite places. Define the group of $S$-units of $K$ by
\begin{equation}\label{eq10a}
\mathcal{O}_S^{*} :=  \{x \in K : |x|_v =1 \quad \mbox{for}\quad v \in M_K\setminus S\}. 
\end{equation} We also need the following result on linear equations in $S$-units (see \cite{ev, sch}).

\begin{theorem}\label{prop4}
Let $\gamma_1, \ldots, \gamma_m \in K\setminus\{0\}$. Then the equation 
\[\gamma_1y_1+\cdots+\gamma_my_m=1\]
has finitely many solutions in $S$-units $y_1, \ldots,y_m$ such that no proper sub-sum \[\gamma_{i_1}y_{i_1}+ \cdots+\gamma_{i_r}y_{i_r} = 0.\] 
\end{theorem}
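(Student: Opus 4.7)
The plan is to prove this by induction on $m$, with the Subspace Theorem (Theorem \ref{thmsub}) as the main engine. After absorbing the $\gamma_i$ into new variables $x_i := \gamma_i y_i$, the $x_i$ live in a finitely generated subgroup of $K^*$ (a coset of $\mathcal{O}_S^*$), and the equation becomes $x_1 + \cdots + x_m = 1$ with the non-vanishing sub-sum hypothesis now reading: no proper sub-sum of the $x_i$ equals zero. For the base case $m=1$ finiteness is trivial; for $m=2$ it is the classical Mahler/Roth two-variable S-unit theorem, and falls directly out of Theorem \ref{thmsub} applied with $n=2$ and the linear forms $L_{1,v}(\mathbf{x}) = x_1,\ L_{2,v}(\mathbf{x}) = x_1+x_2$ at a distinguished place and the coordinate forms elsewhere.

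For the inductive step, assume the result for all shorter equations (satisfying the non-vanishing sub-sum hypothesis). For each $v \in S$, pick an index $i(v) \in \{1,\dots,m\}$ maximizing $|x_i|_v$, and define
\[
L_{i,v}(\mathbf{x}) = x_i \ (i \neq i(v)), \qquad L_{i(v),v}(\mathbf{x}) = x_1 + \cdots + x_m,
\]
and for $v \notin S$ take the standard coordinate forms $L_{i,v}(\mathbf{x}) = x_i$. The gain is that $L_{i(v),v}(\mathbf{x}) = 1$ is small at $v$ compared with the largest $|x_j|_v$. Using the product formula \eqref{eqprodcut} together with the fact that outside $S$ every $x_i$ has $|x_i|_v \leq 1$, a standard telescoping argument converts $\prod_{v\in S}\prod_i |L_{i,v}(\mathbf{x})|_v$ into a quantity bounded by $C\, H(\mathbf{x})^{-\delta}$ for a suitable $\delta > 0$, provided the heights $H(\mathbf{x})$ are unbounded along the solution set (which one may assume, else there is nothing to prove).

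Theorem \ref{thmsub} then forces all but finitely many solutions $\mathbf{x}$ to lie in a finite union of proper linear subspaces of $K^m$. On each such subspace one has a nontrivial relation $\sum_{i} c_i x_i = 0$; solving for some $x_j$ with $c_j \neq 0$ and substituting into $x_1+\cdots+x_m=1$ produces an S-unit equation of length at most $m-1$ among the remaining variables. The non-vanishing sub-sum hypothesis on the original equation is preserved under this reduction: a vanishing sub-sum in the reduced equation would lift, using the linear relation, to a vanishing sub-sum of the original, a contradiction. So the inductive hypothesis applies and produces only finitely many solutions on that subspace; summing over the finitely many exceptional subspaces completes the proof.

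The main obstacle is the analytic heart of the argument: designing the $L_{i,v}$ and carrying out the $v$-adic bookkeeping so that $\prod_{v\in S}\prod_i |L_{i,v}(\mathbf{x})|_v$ really decays faster than every fixed negative power of $H(\mathbf{x})$. This requires using that each $x_i$ is (up to a fixed factor) an $S$-unit to kill the infinite product over places outside $S$, a careful partition of $S \times \{1,\dots,m\}$ to control which coordinate is sacrificed at each place, and a quantitative estimate ensuring that the linear forms remain linearly independent at every $v \in S$. By contrast, once the Subspace Theorem is invoked the descent to lower-length S-unit equations is a routine linear-algebra step, and the preservation of the non-vanishing sub-sum hypothesis under this descent is the conceptually cleanest part of the inductive machine.
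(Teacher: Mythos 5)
First, a point of orientation: the paper does not prove this statement at all --- Theorem \ref{prop4} is quoted as a known result of Evertse and Schlickewei (\cite{ev, sch}), so there is no internal proof to compare against. Judged on its own, your proposal follows the standard derivation of the $S$-unit equation theorem from the Subspace Theorem, and the analytic half is essentially right: replacing $x_{i(v)}$ by $x_1+\cdots+x_m$ at each $v\in S$, using $|x_1+\cdots+x_m|_v=|1|_v$, and invoking the product formula for $S$-units gives $\prod_{v\in S}\prod_i|L_{i,v}(\mathbf{x})|_v=\prod_{v\in S}|x_{i(v)}|_v^{-1}$, which is roughly $H(\mathbf{x})^{-1}$ when $i(v)$ maximizes $|x_i|_v$. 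One technical point you only gesture at: the forms handed to Theorem \ref{thmsub} must not depend on $\mathbf{x}$, so you must first partition the solution set into the finitely many classes determined by the map $v\mapsto i(v)$ and apply the theorem to each class separately.

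The genuine gap is in the descent step. You claim that solving the subspace relation $\sum_i c_ix_i=0$ for some $x_j$ and substituting into $x_1+\cdots+x_m=1$ yields a shorter equation that still satisfies the no-vanishing-sub-sum hypothesis, because ``a vanishing sub-sum in the reduced equation would lift to a vanishing sub-sum of the original.'' This is false as stated: the reduced equation is $\sum_{i\neq j}(1-c_i/c_j)x_i=1$, and a vanishing sub-sum $\sum_{i\in J}(1-c_i/c_j)x_i=0$ only tells you $\sum_{i\in J}x_i=\sum_{i\in J}(c_i/c_j)x_i$, which is not a vanishing sub-sum of $x_1+\cdots+x_m$. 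Since the inductive hypothesis applies only to solutions without vanishing sub-sums, the induction does not close as written. The standard repair is different: subtract $c_j^{-1}$ times the relation from the equation to obtain a vanishing combination $\sum_{i}(1-c_i/c_j)x_i\cdot$(suitable normalization)$=0$ omitting the index $j$; decompose it into minimal vanishing sub-sums; divide each minimal block by one of its terms to get a strictly shorter equation with no vanishing sub-sums, to which induction applies and pins down the ratios $x_i/x_{i_0}$ within each block to finitely many values; then regroup the original equation block by block (a block whose regrouped coefficient vanishes would produce a genuine vanishing sub-sum of the original equation, a contradiction) and apply induction to the resulting strictly shorter equation, whose vanishing sub-sums do expand to vanishing sub-sums of the original. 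Without this regrouping argument the inductive machine you describe does not run.
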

  Finally we recall the basic classical theorem due to C. L. Siegel concerning the finiteness of the number of solutions of a hyperelliptic equation.
  \begin{theorem}[\cite{Siegel}] \label{siegel}
Let $K$ be any number field and $\mathcal{O}_K$ the ring of its algebraic integers. Let $f(x)\in K[x]$ be a non-constant polynomial having at least $3$ roots of odd multiplicity. Then the Diophantine equation \[y^2 = f(x)\] has only finitely many integer solutions $(x, y)\in \mathcal{O}_K^2$.
 \end{theorem}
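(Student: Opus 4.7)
The plan is to use the hypothesis that $f$ has at least three roots of odd multiplicity to reduce the hyperelliptic equation $y^2=f(x)$ to a ternary $S$-unit equation over a suitable finite extension $L$ of $K$, and then to invoke the $S$-unit finiteness theorem (Theorem~\ref{prop4}). This is the classical approach due essentially to Siegel himself, modernized through the language of $S$-units.

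First, I would factor $f(x)=c\,g(x)^{2}\,h(x)$ over an algebraic closure, where $h$ is squarefree of degree $k\geq 3$, collecting exactly the roots of odd multiplicity. Enlarging $K$ to a finite extension $L$ containing all roots $\alpha_{1},\ldots,\alpha_{k}$ of $h$ only loses nothing, since any solution in $\mathcal{O}_K^{2}$ is a solution in $\mathcal{O}_L^{2}$. I would then choose a finite set $S$ of places of $L$ containing the archimedean places, the primes dividing the leading coefficient of $f$ and the coefficients of $g$, and all prime divisors of the pairwise differences $\alpha_i-\alpha_j$; enlarge $S$ further so that $\mathcal{O}_{L,S}$ is a principal ideal domain. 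For any solution $(x,y)$ and any $\mathfrak{p}\notin S$, if $v_{\mathfrak{p}}(x-\alpha_i)>0$ and $v_{\mathfrak{p}}(x-\alpha_j)>0$ with $i\neq j$, then $\mathfrak{p}$ would divide $\alpha_i-\alpha_j$, a contradiction. Hence at most one of the factors $(x-\alpha_i)$ has positive valuation at $\mathfrak{p}$, and comparing valuations in $y^{2}=c\,g(x)^{2}\prod_i(x-\alpha_i)$ forces $v_{\mathfrak{p}}(x-\alpha_i)$ to be even outside $S$.

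Consequently, the ideal $(x-\alpha_i)$ is a square in $\mathcal{O}_{L,S}$, so using that $\mathcal{O}_{L,S}$ is a PID together with Dirichlet's $S$-unit theorem I can write
\begin{equation*}
x-\alpha_i=\gamma_i\,\xi_i^{2},\qquad i=1,\ldots,k,
\end{equation*}
where $\gamma_i$ lies in a finite set of representatives of $\mathcal{O}_{L,S}^{*}/(\mathcal{O}_{L,S}^{*})^{2}$ and $\xi_i\in L^{*}$ with denominators supported on $S$. For each fixed choice of $(\gamma_1,\gamma_2,\gamma_3)$, subtracting pairs of the above relations produces
\begin{equation*}
\gamma_1\xi_1^{2}-\gamma_2\xi_2^{2}=\alpha_2-\alpha_1,\qquad \gamma_1\xi_1^{2}-\gamma_3\xi_3^{2}=\alpha_3-\alpha_1,
\end{equation*}
which, after dividing by $\alpha_2-\alpha_1$ in the second equation minus a scalar multiple of the first, rearranges to a ternary relation $A\xi_1^{2}+B\xi_2^{2}+C\xi_3^{2}=0$ with $A,B,C$ in a fixed finite set. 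Setting $u=A\xi_1^{2}/(-C\xi_3^{2})$ and $v=B\xi_2^{2}/(-C\xi_3^{2})$ turns this into $u+v=1$ with $u,v$ in the group of $S$-units of a suitable enlargement of $L$; Theorem~\ref{prop4} then yields finitely many values of $(u,v)$, hence finitely many ratios $\xi_i^{2}/\xi_j^{2}$, hence finitely many values of $x=\alpha_i+\gamma_i\xi_i^{2}$, and therefore finitely many integral solutions $(x,y)$.

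The main obstacle is the vanishing-subsum condition in the application of Theorem~\ref{prop4}: a relation $A\xi_1^{2}+B\xi_2^{2}=0$ would correspond to a genus-zero component of the curve and must be excluded, which is done using the distinctness of the $\alpha_i$ and the squarefree-ness of $h$. This is precisely where the hypothesis of at least three roots of odd multiplicity is essential: with only two such roots the argument collapses to a binary equation $A\xi_1^{2}+B\xi_2^{2}=\alpha_2-\alpha_1$, which defines a conic and generally admits infinitely many $S$-integral solutions (Pell's equation being the prototype), so finiteness would indeed fail.
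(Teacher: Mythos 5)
The paper does not prove this statement: it is quoted verbatim as a classical theorem of Siegel (1926) and used as a black box, so there is no internal proof to compare yours against. Judged on its own, your proposal follows the standard modern strategy (reduction of the hyperelliptic equation to an $S$-unit equation), and the first half is sound: the factorization $f=c\,g^2h$, the passage to $L$ and to a set $S$ making $\mathcal{O}_{L,S}$ a PID and the differences $\alpha_i-\alpha_j$ into $S$-units, the coprimality argument showing $v_{\mathfrak{p}}(x-\alpha_i)$ is even outside $S$, and the resulting representation $x-\alpha_i=\gamma_i\xi_i^2$ with $\gamma_i$ in a finite set are all correct.

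The gap is in the final step. After deriving the ternary relation $A\xi_1^2+B\xi_2^2+C\xi_3^2=0$ you set $u=A\xi_1^2/(-C\xi_3^2)$, $v=B\xi_2^2/(-C\xi_3^2)$ and apply Theorem~\ref{prop4}, but that theorem requires $u$ and $v$ to be $S$-units, and they need not be: the $\xi_i$ are only $S$-integers, and each may be divisible by (different) primes outside $S$, so the ratio $\xi_1^2/\xi_3^2$ can have nontrivial valuation at primes outside $S$. (The pairwise coprimality of the $x-\alpha_i$ outside $S$ does not help here; it only says $\xi_1$ and $\xi_3$ share no prime outside $S$, not that each is a unit.) Indeed, a single ternary quadratic relation among $S$-integers is a conic and typically has infinitely many solutions, so no finiteness can be extracted at this point. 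The standard repair is Siegel's identity: enlarge $L$ to $M=L(\sqrt{\gamma_1},\sqrt{\gamma_2},\sqrt{\gamma_3})$ and put $w_i=\sqrt{\gamma_i}\,\xi_i$, so that $x-\alpha_i=w_i^2$ and $(w_i-w_j)(w_i+w_j)=\alpha_j-\alpha_i$ is an $S$-unit while both factors are $S$-integers; hence each $w_i\pm w_j$ is itself an $S$-unit. Then the \emph{linear} identity
\begin{equation*}
\frac{w_1-w_2}{w_1-w_3}+\frac{w_2-w_3}{w_1-w_3}=1
\end{equation*}
is a genuine two-term $S$-unit equation to which Theorem~\ref{prop4} applies; finitely many values of $(w_1-w_2)/(w_1-w_3)$ then pin down $(x-\alpha_1)/(x-\alpha_3)$ up to finitely many choices and hence $x$, since $\alpha_1\neq\alpha_3$. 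Your closing remark about why two odd-multiplicity roots do not suffice is correct, but the proof as written does not establish the theorem without this correction.
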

 
 Now we will find an upper bound for $|U_n+ U_m|$. This bound will be utilized in the proof of Theorem \ref{th2}.

\begin{lemma}\label{lem-upperbound}
There exist constants $c_1, d_1$  and $d_2$ such that the following inequalities hold:
\begin{enumerate}
\item $|U_n + U_m| < c_1 \alpha^n$,
\item if  $n> d_2$, then $q< d_1n$.
\end{enumerate}
\end{lemma}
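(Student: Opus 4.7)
The plan is to obtain both estimates directly from the Binet-type formula \eqref{neweq5} for $U_n$, without using any deep machinery.

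For part (1), I would start from $U_n = (a_1\alpha^n - a_2\beta^n)/(\alpha-\beta)$. Since $(U_n)_{n\geq 0}$ has $\alpha$ as a dominant root (so $\alpha > |\beta|$), we have $|\beta|^n \leq \alpha^n$ for every $n \geq 0$, and the triangle inequality gives
\[
|U_n| \;\leq\; \frac{|a_1|\,\alpha^n + |a_2|\,|\beta|^n}{\alpha-\beta} \;\leq\; \frac{|a_1|+|a_2|}{\alpha-\beta}\,\alpha^n.
\]
The same bound applies to $U_m$, and since $m \leq n$ we may replace $\alpha^m$ by $\alpha^n$. Summing and using the triangle inequality, I would set
\[
c_1 \;:=\; \frac{2(|a_1|+|a_2|)}{\alpha-\beta},
\]
which depends only on $\alpha,\beta,U_0,U_1,D$, and this gives $|U_n+U_m| < c_1\alpha^n$.

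For part (2), I would combine part (1) with the equation $U_n + U_m = x^q$ and the hypothesis $x \geq 2$. Indeed,
\[
2^q \;\leq\; x^q \;=\; U_n + U_m \;\leq\; |U_n+U_m| \;<\; c_1 \alpha^n,
\]
so, taking logarithms,
\[
q \;<\; \frac{\log c_1 + n\log\alpha}{\log 2}.
\]
I would then choose $d_2$ so that $\log c_1 < n\log\alpha$ for $n > d_2$ (for instance, $d_2 := \max\{1,\lceil \log c_1/\log\alpha \rceil\}$). For such $n$ the numerator is less than $2n\log\alpha$, so setting
\[
d_1 \;:=\; \frac{2\log\alpha}{\log 2}
\]
yields $q < d_1 n$, as required. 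Both $d_1$ and $d_2$ depend only on the data $\alpha,\beta,U_0,U_1,D$, in keeping with the effective formulation in Theorem \ref{th2}.

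There is no real obstacle here: the whole content is that the dominant-root property makes $U_n$ grow like $\alpha^n$, after which the bound on $q$ is forced by the fact that $x \geq 2$ so that $x^q$ cannot exceed $c_1 \alpha^n$. The only thing to watch is that the constants be effective and independent of $n,m,q,x$, which the construction above ensures.
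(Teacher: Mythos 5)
Your proposal is correct and follows essentially the same route as the paper: the triangle inequality applied to the Binet formula gives $c_1 = 2(|a_1|+|a_2|)/\sqrt{D}$ (identical to the paper's constant, since $\sqrt{D}=\alpha-\beta$), and part (2) follows by taking logarithms in $x^q < c_1\alpha^n$. In fact your choice $d_1 = 2\log\alpha/\log 2$ is slightly more careful than the paper's $d_1 = 2\log\alpha$, whose final step $q < d_1 n/\log x < d_1 n$ implicitly needs $\log x \geq 1$, whereas your division by $\log 2$ handles $x=2$ cleanly.
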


\begin{proof}
Since $\alpha >|\beta|$ and $|\alpha- \beta| = \sqrt{D}$, from \eqref{neweq5}, we have
\begin{equation}\label{sec2:eq2}
|U_n| = \left|\frac{a_1\alpha^n-a_2\beta^n}{\alpha-\beta}\right| \leq  \frac{\alpha^n}{\sqrt{D}} \left(|a_1|+ |a_2|\frac{|\be|^n}{\alpha^n}\right)< c_0\alpha^n,
\end{equation}
where $c_0:= \frac{|a_1|+|a_2|}{\sqrt{D}}$.
Thus, 
\[|U_n + U_m| \leq |U_n| +|U_m| <c_0(\alpha^n+\alpha^m),\] and since 
$n\geq m$, we have 
\[|U_n + U_m| < c_1\alpha^n,\] where $c_1 := 2c_0$.
Using \eqref{eq9a}, we get
\[x^q = |U_n + U_m| < c_1\alpha^n.\] Now, by taking logarithms in the above inequality, we find
\[q\log x < \log c_1 + n\log \alpha = n \log \alpha\left(1 + \frac{\log c_1}{n\log \alpha}\right).\] Putting $d_2:= \max\left\{0, \frac{\log c_1}{\log \alpha}\right\}$ and $d_1:= 2\log \alpha$ and assuming that $n> d_2$, we obtain
\[q< d_1 n/\log x <d_1 n.\]
\end{proof}

\begin{lemma}\label{lem-lowerbound}\cite{Pink2016}
There exist constants $d_3$ and $d_4$ such that for $n>d_3$, we have
\[|U_n+U_m| > d_4\alpha^n.\]
\end{lemma}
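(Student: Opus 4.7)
The plan is to extract the dominant $\alpha^n$ scale from the Binet-type representation \eqref{neweq5} and show that every other term in the expansion is negligible once $n$ is large enough, uniformly in $m$.

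First, I would write
\[
U_n + U_m \;=\; \frac{a_1(\alpha^n + \alpha^m) \;-\; a_2(\beta^n + \beta^m)}{\alpha - \beta}.
\]
The non-degeneracy hypothesis guarantees $a_1 \neq 0$, and the discriminant assumption $D>0$ gives $\alpha - \beta = \sqrt{D} > 0$. Since $n \geq m \geq 0$ and $\alpha > 1$, the ``main'' contribution $a_1(\alpha^n + \alpha^m)$ has absolute value at least $|a_1|\alpha^n$. Thus the reverse triangle inequality yields
\[
|U_n + U_m| \;\geq\; \frac{|a_1|\alpha^n \;-\; |a_2|(|\beta|^n + |\beta|^m)}{\alpha - \beta},
\]
provided the right-hand numerator is non-negative.

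Second, I would bound the ``error'' term $|a_2|(|\beta|^n + |\beta|^m)$ by splitting on the size of $|\beta|$: if $|\beta| \leq 1$ then $|\beta|^n + |\beta|^m \leq 2$, while if $|\beta| > 1$ then $|\beta|^n + |\beta|^m \leq 2|\beta|^n$. Because $\alpha > |\beta|$, in the first case the error is bounded absolutely while $\alpha^n \to \infty$, and in the second case $(|\beta|/\alpha)^n \to 0$. In either case, and \emph{uniformly} in $m \in \{0, 1, \ldots, n\}$, one can pick an effectively computable $d_3$ depending only on $a_1, a_2, \alpha, \beta$ such that for $n > d_3$,
\[
|a_2|(|\beta|^n + |\beta|^m) \;\leq\; \tfrac{1}{2}\,|a_1|\,\alpha^n.
\]

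Combining these two steps gives
\[
|U_n + U_m| \;\geq\; \frac{|a_1|}{2(\alpha - \beta)}\,\alpha^n \qquad (n > d_3),
\]
which is the claim with $d_4 := |a_1|/\bigl(2(\alpha - \beta)\bigr)$. The only potential obstacle is ensuring that the threshold $d_3$ can be chosen \emph{independently of $m$}; the two-case split on $|\beta|$ is precisely what makes the error bound uniform in $m$, since in both cases the dominant factor depends only on $n$. No further tools beyond the Binet formula and the dominant-root condition $\alpha > |\beta|$ are needed.
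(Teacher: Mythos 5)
Your proof is correct: the decomposition via the Binet formula \eqref{neweq5}, the lower bound $|a_1(\alpha^n+\alpha^m)|\geq |a_1|\alpha^n$ (valid since $a_1\neq 0$ by non-degeneracy and $\alpha>1$), and the case split on $|\beta|\lessgtr 1$ to make the error term at most $\tfrac12|a_1|\alpha^n$ uniformly in $m$ all go through. The paper gives no proof of its own, deferring to the citation of Pink--Ziegler, and your argument is exactly the standard one that reference uses, so there is nothing to flag.
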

From Lemma \ref{lem-lowerbound} and \eqref{eq9a}, we deduce $d_4\alpha^n< x^q$ and this implies
\begin{equation}\label{eq121}
n/q < d_5 \log x.
\end{equation}

\begin{lemma}\label{sec3: lem4}
If $(n, m, x, q)$ is a solution of \eqref{eq9a} with $n>m, x\geq 2, q\geq 2$, then for $n>d_6$ we have
\[n-m \leq   c_2 \log q \log x,\]
where the constant $d_6, c_2$  are effectively computable in terms of $\al, \beta, a_1 \;\hbox{and} \;a_2$.
\end{lemma}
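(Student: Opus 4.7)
The plan is to apply Matveev's bound to a linear form extracted from the Binet representation of $U_n$. From \eqref{neweq5} and \eqref{eq9a}, the dominant term $\frac{a_1\alpha^n}{\alpha-\beta}$ can be isolated:
\[
\frac{a_1\alpha^n}{\alpha-\beta}-x^q \;=\; -U_m+\frac{a_2\beta^n}{\alpha-\beta}.
\]
Dividing both sides by $\frac{a_1\alpha^n}{\alpha-\beta}$ and estimating $|U_m|\le c_0\alpha^m$ (as in the proof of Lemma~\ref{lem-upperbound}) together with $|\beta|<\alpha$, one obtains
\[
|\Lambda| \;:=\; \left|\frac{\alpha-\beta}{a_1}\,\alpha^{-n}\,x^q-1\right| \;\le\; C_1\,\alpha^{-(n-m)}+C_2\left(\frac{|\beta|}{\alpha}\right)^n,
\]
for constants $C_1,C_2$ depending effectively on $\alpha,\beta,a_1,a_2$.

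I would then invoke Theorem~\ref{lem:matveev} with $t=3$ algebraic numbers $\gamma_1=(\alpha-\beta)/a_1$, $\gamma_2=\alpha$, $\gamma_3=x$ and integer exponents $b_1=1$, $b_2=-n$, $b_3=q$, working in $\mathbb{L}=\Q(\alpha)$. Since $h(\gamma_1)$ and $h(\gamma_2)$ depend only on $\alpha,\beta,U_0,U_1$ while $h(\gamma_3)=\log x$, one may take $A_1,A_2=O(1)$ and $A_3=O(\log x)$. Using Lemma~\ref{lem-upperbound}(2) and \eqref{eq121}, $B:=\max\{n,q\}\le d_1 d_5\,q\log x$, whence $1+\log B=O(\log q+\log\log x)$. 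Assuming $\Lambda\ne 0$ (if $\Lambda=0$, then $a_1\alpha^n=(\alpha-\beta)x^q$; combined with the Galois conjugate relation this forces $(\alpha/\beta)^n=-a_2/a_1$, which by non-degeneracy of $(U_n)$ can hold for at most one $n$ and so is handled by absorbing it into the bound), Matveev yields
\[
|\Lambda| \;\ge\; \exp\bigl(-c_4\,\log x\,(\log q+\log\log x+1)\bigr).
\]

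Comparing the two bounds for $|\Lambda|$ completes the proof via a dichotomy on which term dominates the upper bound. If $C_1\alpha^{-(n-m)}\ge C_2(|\beta|/\alpha)^n$, taking logarithms gives $(n-m)\log\alpha\le c_4\log x\,(\log q+\log\log x+1)+O(1)$, from which $n-m\le c_2\log q\log x$ after absorbing the $\log\log x$ term (using $q,x\ge 2$). Otherwise $(|\beta|/\alpha)^n$ dominates, forcing $n\log(\alpha/|\beta|)\le c_4\log x\,(\log q+\log\log x+1)+O(1)$, so $n\le c_2'\log q\log x$, and the claim follows a fortiori from $n-m\le n$. The main subtlety is the two-term nature of the upper bound, which forces the case split; verifying $\Lambda\ne 0$ is a routine irrationality argument using the non-degeneracy hypotheses.
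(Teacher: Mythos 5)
Your setup (isolating the dominant term of the Binet formula, the two-term upper bound for $|\Lambda|$, and the non-vanishing argument via Galois conjugation and non-degeneracy) is sound, but the application of Matveev's theorem has a genuine gap that leaves you short of the stated bound. With your choice of exponents $b_1=1$, $b_2=-n$, $b_3=q$ you must take $B=\max\{n,q\}$, and since $n$ can be as large as $d_5q\log x$ by \eqref{eq121}, you only get $1+\log B=O(\log q+\log\log x)$. The resulting lower bound $\exp\bigl(-c\log x\,(\log q+\log\log x+1)\bigr)$ therefore yields $n-m\le c\,\log x\,(\log q+\log\log x)$, and the $\log\log x$ term cannot be ``absorbed'' into $\log q$: for $q=2$ and $x\to\infty$ that absorption would require $\log\log x=O(1)$. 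Since $c_2$ must be effectively computable in terms of $\alpha,\beta,a_1,a_2$ alone, and the lemma is invoked in Proposition \ref{propo1} precisely in the regime where $x$ grows with $n$ and the clean factor $\log q$ is what makes $(n-m)/n$ small for large $q$, this is a real loss rather than a cosmetic one.

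The paper avoids this by writing $n=qk+r$ with $0\le r<q$ and taking $\gamma_1=x/\alpha^{k}$, $\gamma_2=\alpha$, $\gamma_3=a_1^{-1}\sqrt{D}$ with exponent vector $(q,-r,1)$, so that $B=q$ and $1+\log B=O(\log q)$. The dependence on $n$ is shifted into the height $h(\gamma_1)\le\log x+k\log\alpha$, which remains $O(\log x)$ because $k\le n/q<d_5\log x$ by \eqref{eq121}; thus $A_1=O(\log x)$, $A_2,A_3=O(1)$, and Matveev gives exactly $(n-m)\le c_2\log q\log x$. If you regroup your linear form in this way, the rest of your argument goes through; in particular your case split on which term of the upper bound dominates is fine (the paper sidesteps it with the single estimate $(|\beta|/\alpha)^{n}\le(|\beta|/\alpha)^{n-m}$, valid since $m\ge0$), and your treatment of $\Lambda=0$ is essentially the paper's.
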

\begin{proof}
Using \eqref{neweq5} and \eqref{sec2:eq2}, one can write \eqref{eq9a} as
 \begin{align*}
 \left| \frac{a_1\al^n}{\sqrt{D}} - x^q \right| = \left|U_m + \frac{a_2\be^n}{\sqrt{D}} \right| \leq |U_m| +\left| \frac{a_2\be^n}{\sqrt{D}} \right| \leq c_0\alpha^m + \frac{|a_2||\be|^n}{\sqrt{D}}.
 \end{align*}
Dividing both sides by $|a_1|\alpha^n/\sqrt{D}$, we get \begin{align}\label{sec3:eq1}
\begin{split}
\left|  1 - x^{q} \alpha^{-n} a_1^{-1}\sqrt{D} \right|  & \leq \frac{c_0\sqrt{D}\al^m} {|a_1|\al^n}  + \frac{|a_2|}{|a_1|}\frac{|\be|^n} {\al^n} \leq \frac{c_0\sqrt{D}}{|a_1|}\alpha^{m-n}+  \frac{|a_2|}{|a_1|}\left(\frac{\alpha}{|\beta|}\right)^{m-n}\\
&\leq \frac{c_0\sqrt{D}}{|a_1|}\left(\frac{|\beta|}{\alpha}\right)^{n-m}+  \frac{|a_2|}{|a_1|}\left(\frac{|\beta|}{\alpha}\right)^{n-m}\\
&\leq \left(\frac{c_0\sqrt{D}}{|a_1|}+  \frac{|a_2|}{|a_1|}\right)\left(\frac{|\beta|}{\alpha}\right)^{n-m}
\end{split}
\end{align}
for the case $|\beta|>1$, and for $|\beta|\leq 1$, 
\[\left|  1 - x^{q} \alpha^{-n} a_1^{-1}\sqrt{D} \right|   \leq \frac{c_0\sqrt{D}} {|a_1|} \al^{m-n} + \frac{|a_2|}{|a_1|}.\]
Setting $n =  qk + r$, where $ r \in \{0, 1, \ldots, q-1\}$, in the left hand side of \eqref{sec3:eq1}, we get
\begin{equation}\label{sec3:eq2}
 \left|  1 - x^{q} \alpha^{-qk-r} a_1^{-1}\sqrt{D} \right|   \leq  \left(\frac{c_0\sqrt{D}}{|a_1|}+  \frac{|a_2|}{|a_1|}\right)\max\left\{\frac{1}{\al^{n-m}}, \left(\frac{|\beta|}{\al}\right)^{n-m}\right\}. 
 \end{equation}	
Let \[\Lambda_1:= (x/\alpha^{k})^q  \alpha^{-r} a_1^{-1}\sqrt{D} - 1.\] Now we apply Theorem \ref{lem:matveev}, by taking \[\mathbb{L}= \Q(\alpha), \ga_1:= x/\alpha^{k}, \ga_2:= \alpha , \ga_3:= a_1^{-1} \sqrt{D}, b_1:= q, b_2:= -r, b_3:= 1 , t= 3 , d_{\mathbb{L}} = 2.\] 
If $\Lambda_1 =0$, then 
\begin{equation}\label{reveq001}
 a_1\alpha^{n} = x^{q} \sqrt{D}.
 \end{equation}
  Let $\sigma$ be a Galois automorphism of $\Q(\alpha)$ which sends $\sqrt{D}$ to $-\sqrt{D}$. Then applying $\sigma$ to \eqref{reveq001}, we get
  \begin{equation}\label{reveq002}
a_2 \beta^{n} = -x^{q} \sqrt{D}.
 \end{equation}
Hence, from \eqref{reveq001} and \eqref{reveq002}, we have $|a_1||\alpha|^n = |a_2||\beta|^n$ and this implies $n= d_6$ where $d_6:= \frac{\log (|a_2|/|a_1|)}{\log (|\al|/|\beta|)}$.
 But this is not true since $n>d_6$ and therefore, we may assume that $\Lambda_1 \neq 0$. We may take $B = q$. Then, using Lemma \ref{lemheightprop}, we get
\begin{align*}
h(\ga_1)&= h(x/\alpha^k) \leq h(x)+ h(\alpha^k)\\
&\leq  \log x + k h(\alpha)\leq \log x+ k \log \alpha.
\end{align*}
Again,
\[h(\ga_2) = \frac{1}{2} \left[ \log \max \left(1,\alpha\right) + \log \max \left(1,|\beta|\right)\right]\leq \log \alpha\] and $h(\ga_3) \leq h(\sqrt{D})+\log|a_1| =\log|a_1|+  \frac{\log D}{2}$. So, we take 	 \[A_1 =  k\log \al + \log x,\quad  A_2= 2\log \al, \quad A_3 =  2\log |a_1|+\log D.\] 
Since $x$ is fixed,  from \eqref{eq121} and Lemma \ref{lem-upperbound}, we obtain $d_7<n/q<d_8$. Using Theorem \ref{lem:matveev}, we get
\begin{equation}\label{eq123}
 |\Lambda_1|> \exp\left(-1.4 \cdot 30^{t+3}\cdot  t^{4.5} \cdot d_{\mathbb{L}}^2 (1+ \log d_{\mathbb{L}}) (1+ \log B)A_1 A_2 A_3\right).
\end{equation}
Comparing the upper bound of inequality \eqref{sec3:eq2} with the lower bound of $|\Lambda_1|$  in \eqref{eq123}, we obtain
\begin{align}\label{eq122}
\begin{split}
(n-m)\max\left\{\al, \left(\frac{\alpha}{|\beta|}\right)\right\} &< 1.4 \cdot 30^{t+3}\cdot  t^{4.5} \cdot d_{\mathbb{L}}^2 (1+ \log d_{\mathbb{L}}) (1+ \log B)A_1 A_2 A_3\\
&< 1.4 \cdot 30^{6}\cdot  3^{4.5} \cdot 4 (1+ \log 2) (1+ \log q)\\
&\times (k\log \al + \log x)(2\log \alpha) (\log D)+\log \left(\frac{c_0\sqrt{D}}{|a_1|}+  \frac{|b_1|}{|a_1|}\right).
\end{split}
\end{align}
As $k\leq n/q$, from \eqref{eq122}, it follows that
\[(n-m) <  c_2 \log q \log x,\]
where $c_2$  is effectively computable in terms of $\al, \beta, a_1, a_2$. 
\end{proof}
Now we are ready to prove Theorem \ref{th2}. Our proof closely follows the proof in \cite{mr2019, mr}.

\section{Proof of Theorem \ref{th2}}\label{sec2}

 Since $n\geq m$, we divide the proof into two parts according as $n=m$ and $n>m$. 

{\sc Case I.} ($n =m$).  In this case, \eqref{eq9a} can be written as
\begin{equation}\label{proofthm1eq1a}
2U_n = x^q.
\end{equation}
Thus, in view of Peth\"{o}  \cite[p.6]{petho}, \eqref{proofthm1eq1a} has only finitely many effectively computable solutions in integers $n, x, q$ with $|x|, q\geq 2$.

{\sc Case II.} ($n > m$). Using \eqref{neweq5}, one can rewrite \eqref{eq9a} as
\[ \left| \frac{a_1\al^n +a_1 \al^m}{\sqrt{D}} - x^q \right| = \left|\frac{a_2\be^n + a_2\be^m}{\sqrt{D}}\right|.\]
Since $n>m$, $|\beta|^n+ |\beta|^m < 2|\beta|^n$, and consequently
\begin{equation}\label{proofthm1eq1}
\left|  1 - x^{q} \sqrt{D}\alpha^{-n} a_1^{-1}(1+\al^{m-n})^{-1} \right|   \leq  \frac{|a_2|}{|a_1|}\frac{|\be|^n + |\be|^m}{\al^n(1+\al^{m-n})} \leq \frac{|a_2|}{|a_1|}\frac{2|Q|^n}{\al^{n+m}}. 
\end{equation}
Let
\[ \Lambda_2 =  x^{q} \sqrt{D}\alpha^{-n} a_1^{-1}(1+\al^{m-n})^{-1} - 1.\]
Now we employ Theorem \ref{lem:matveev} on $\Lambda_2$. To do so, we take 
\begin{align*}
&\mathbb{L} = \Q(\alpha), \ga_1:= x, \ga_2:= \alpha , \ga_3:= \sqrt{D}a_1^{-1}(1+\al^{m-n})^{-1};\\
&b_1:= q, b_2:= -n, b_3:= 1 , t= 3 , d_{\mathbb{L}} = 2.
\end{align*}
Assuming $n> d_2$, we may take $B = d_1n$ (see Lemma \ref{lem-upperbound}). If $\Lambda_2 = 0$, then we get 
\begin{equation}\label{proofthm1eq2}
x^{q} \sqrt{D} =a_1(\al^{n} + \al^{m}).
\end{equation}
Let $\sigma$ be a Galois automorphism of $\mathbb{L}$ which sends $\sqrt{D}$ to $-\sqrt{D}$. Then applying $\sigma$ to \eqref{proofthm1eq2}, we get   
\begin{equation}\label{proofthm1eq3}
-x^{q} \sqrt{D} =a_2(\be^{n} + \be^{m}).
\end{equation}
Since $\al>1$, by virtue of \eqref{proofthm1eq2} and \eqref{proofthm1eq3}, we have
\begin{equation}
|a_1|\al^n \leq |a_1|(\al^m +\al^n)  = |a_2(\be^m +\be^n)|.
\end{equation} If $|\beta|\leq 1$, then $|a_1|\al^n <2|a_2|< 2(|a_2|+|a_1|)$ which implies that $n < \frac{\log (2(|a_2|+|a_1|)/|a_1|)}{\log \alpha}$. Furthermore, if $|\beta|>1$, then
\[|a_1|\al^n \leq 2|a_2||\beta|^n< 2|\beta|^n(|a_2|+|a_1|)\]  and hence
\[n< \frac{\log (2(|a_2|+|a_1|)/|a_1|)}{\log (\alpha/|\beta|)}.\] Thus,
 \[n<c_4: = \max\left\{\frac{\log (2(|a_2|+|a_1|)/|a_1|)}{\log \alpha}, \frac{\log (2(|a_2|+|a_1|)/|a_1|)}{\log (\alpha/|\beta|)}\right\}.\] But this is not true since  $n  >c_4$ and therefore, $\Lambda_2\neq 0$. To apply Theorem \ref{lem:matveev}, we need to estimate $h(\ga_i)$ for $i=1, 2, 3$. One can easily see that $h(\ga_1) = \log x$, and $h(\ga_2) \leq \log \al$.  In view of Lemma \ref{sec3: lem4} and Lemma \ref{lemheightprop}, we have
\begin{align*}
h(\ga_3) &\leq h(\sqrt{D}) + h(a_1(1+\al^{m-n})) \leq \log \sqrt{D}+\log |a_1|+ \frac{(n-m)\log \al }{2}+ \log 2 \\
&\leq   c_5' \log x \log q.
\end{align*}
Hence,  
\[A_1 = 2\log x,\quad A_2= \log \al, \quad 	A_3 =   c_5 \log x \log q,\]
where $c_5$ is effectively computable in terms of $\al, \beta, a_1$ and $a_2$. Thus,
\begin{align*}
|\Lambda_2| > \exp[(-1.4)\cdot &30^6\cdot 3^{4.5}\cdot 4(1+\log 2)\\
&(1+\log (d_1n))\cdot 2\cdot (\log x)\cdot ( \log \al)  c_5 \log x \log q].  
\end{align*}
Now comparing the lower and upper bounds for $\Lambda_2$, we find
\begin{equation}\label{proofthm1eq5}
(n+m)\log \al +\log (|a_1|/|a_2|)-\log 2 -n\log |Q| < c_6\cdot (\log n)(\log x)^2 \log q.     
\end{equation}
Adding $(n-m)\log \al$ to both sides of \eqref{proofthm1eq5} and then using Lemma \ref{sec3: lem4} under the assumption $n>d_6$, we get
\begin{align*}
 n(2\log \al &- \log |Q|) \\
 &< \log 2 -\log (|a_1|/|a_2|)+   (n-m) \log \al +  c_6 (\log n)(\log x)^2 \log q \\
&<   \log 2 -\log (|a_1|/|a_2|) +  c_2 \log q \log x \log \al  +  c_6 (\log n)(\log x)^2 \log q \\
&< c_7 \log q \log x + c_6 (\log n)(\log x)^2 \log q. 
\end{align*}
Now an application of Lemma \ref{lem-upperbound} gives, $\log q < c_{8}\log n$. Hence,
\begin{align*}
n(2\log \al - \log |Q|)	&< c_9 \log n \log x + c_{10} (\log n)^2(\log x)^2  \\
&< c_{11} (\log n)^2(\log x)^2
\end{align*}
and consequently, 
\begin{equation}\label{proofthm1eq6}
\frac{n}{(\log n)^2} < c_{12} (\log x)^2.
\end{equation}
In view of Lemma \ref{lem9},  if $T > 2^8$ and $\frac{n}{(\log n)^2} < T$,   then $ n < 4T(\log T)^2$. Applying this inequality to \eqref{proofthm1eq6}, we get
\[ n < 4c_{12} (\log x)^2 (\log (c_{12} (\log x)^2 ))^2.\]
From the inequality $\log x < x $, it follows that for $n> C_2:= \max\{d_2, d_6, c_4\}$,
	                   \[ n < C_1 (\log x)^4,\]
where the constants $C_1, C_2$ are effectively computable in terms of $\al, \beta, a_1, a_2$. This completes the proof of the theorem. \qed

\section{Proof of Theorem \ref{th3}}\label{sec3}

 To prove Theorem \ref{th3}, we first show that when $q> C_3$, \eqref{eq9a} has only finitely many positive integer solutions if the {\em abc-conjecture} is true. We then show that when $2\leq q \leq C_3$, \eqref{eq9a} has infinitely many positive integer solutions  and further these solutions satisfy an equation of the form $am+ bn = c$ with $\max\{|a|, |b|\} < C_4$. Finally, using Theorem \ref{siegel}, we get a contradiction. Our proof closely follows the proofs in (\cite{fuchs2009, kkll}).

\begin{proposition}\label{propo1}
Let  $(U_{n})_{n \geq 0}$ be a non-degenerate 
binary recurrence sequence of integers satisfying 
recurrence \eqref{eq4} with $|Q| = 1$. Assume that $(U_{n})_{n \geq 0}$ has a dominant root $\alpha >1$ and suppose that the abc-conjecture holds. Then \eqref{eq9a} has only finitely many positive integer solutions $(n, m, x, q)$ with $n > C_5, \; q \geq 2$ and $n - m < C_6n$. In particular, it has only finitely many solutions when $q > C_3$.
 \end{proposition}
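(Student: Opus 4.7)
The plan is to deduce the ``in particular'' clause quickly from the main statement, then prove the main statement by combining a product factorization of $U_n + U_m$ (enabled by $|Q|=1$) with the abc-conjecture applied to a Pell-type identity.

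First, I will handle the ``in particular'' clause. Lemma \ref{sec3: lem4} gives $n - m \le c_2 \log q \log x$, and combining with $x^q \asymp \alpha^n$ from Lemmas \ref{lem-upperbound}--\ref{lem-lowerbound} yields $n - m \ll n \log q / q$. Hence once $q$ exceeds a suitable threshold $C_3$, the inequality $n - m < C_6 n$ holds automatically, so the main clause applies.

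For the main clause, the key input is $|Q|=1$, i.e.\ $\alpha\beta = \pm 1$, so $\alpha^{-k} = \pm\beta^k$. In the ``generic'' case $n \equiv m \pmod 2$ and $Q=-1$ a direct computation gives the clean factorization
\[
U_n + U_m \;=\; U_r \cdot V_s, \qquad r := \tfrac{n+m}{2},\ s := \tfrac{n-m}{2},
\]
where $V_k = \alpha^k + \beta^k$ is the companion Lucas sequence; the other three cases determined by $Q = \pm 1$ and the parities of $n-m$ and $s$ yield analogous factorizations $U_n + U_m = A_r B_s$ involving the normalized Lucas pair $(u_k, v_k)$ and/or the Galois-conjugate sequence $W_r := a_1 \alpha^r + a_2 \beta^r \in \Z$. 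In every case $|A_r| \asymp \alpha^r$, $|B_s| \asymp \alpha^s$, and $\gcd(A_r, B_s)$ is bounded by a constant $c^*$ depending only on $D$, $a_1$, $a_2$.

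Given $x^q = A_r B_s$ with $\gcd(A_r,B_s) \le c^*$, each factor is a $q$-th power up to a bounded integer: $A_r = D_1 y^q$ and $B_s = D_2 z^q$ with $D_1, D_2 = O(1)$, $y \asymp \alpha^{r/q}$, and $z \asymp \alpha^{s/q}$. Plugging the $U$-part (say $U_r = D_1 y^q$) into the Pell-type identity
\[
W_r^2 - D\, U_r^2 \;=\; 4\, a_1 a_2 \, (-Q)^r,
\]
which follows from $(W_r - \sqrt{D} U_r)(W_r + \sqrt{D} U_r) = 4 a_1 a_2 (\alpha\beta)^r$, produces the three-term relation $W_r^2 - D D_1^2 y^{2q} = 4 a_1 a_2 (-Q)^r$ with $O(1)$ right-hand side. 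Applying Conjecture \ref{conj1} after dividing out the bounded $\gcd$ of the three terms, and using $\mathrm{rad}(W_r^2) \le W_r$ and $\mathrm{rad}(y^{2q}) \le y$, gives $W_r^2 \le \kappa(\epsilon) (W_r y)^{1+\epsilon} \cdot O(1)$. Substituting $W_r \asymp \alpha^r$ and $y \asymp \alpha^{r/q}$ this collapses to
\[
2r \;\le\; (1+\epsilon)\bigl(r + r/q\bigr) + O(1),
\]
and since $(1+\epsilon)(1 + 1/q) < 2$ for any $\epsilon < 1/3$ and $q \ge 2$, the coefficient of $r$ on the implicit left is strictly positive, forcing $r$ to be bounded by an absolute constant. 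Since $r \ge n/2$, this bounds $n$, contradicting $n > C_5$ for $C_5$ sufficiently large, and yields only finitely many solutions in the regime.

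The hard part will be (i) producing the product factorization uniformly for all parities of $n-m$ and both signs $Q = \pm 1$, with the case $n \not\equiv m \pmod 2$ being most delicate and likely requiring a small preliminary step via the recurrence \eqref{eq4}; (ii) establishing the absolute bound on $\gcd(A_r, B_s)$ for a general binary recurrence rather than only the normalized Lucas pair $(u_k, v_k)$; and (iii) tracking the constants $D_1, D_2$ and the common divisor of the abc-triple carefully so that they do not overwhelm the $\alpha^r$-scale estimates.
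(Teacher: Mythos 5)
There is a genuine gap, and it sits exactly where you relegate the difficulty to a ``small preliminary step'': the factorization $U_n+U_m=A_rB_s$ with $r=\tfrac{n+m}{2}$, $s=\tfrac{n-m}{2}$ exists only when $n\equiv m\pmod 2$ (otherwise $r,s$ are not even integers, and no analogous two-term factorization of $U_n+U_m$ is available). This is not a technicality to be patched via the recurrence \eqref{eq4}; it is precisely the obstruction that leaves $F_n+F_m=y^q$ open unconditionally for $n\not\equiv m\pmod 2$ (as the paper's introduction points out), and it is the reason the \emph{abc}-conjecture is invoked in the first place. Proposition \ref{propo1} carries no parity restriction, so your argument covers only half the cases. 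The paper avoids factorization altogether: it forms the rational integer $X_{n,m}=a_1(\alpha^n+\alpha^m)+a_2(\beta^n+\beta^m)$ and uses the Pell-type identity
\[
X_{n,m}^2-D(U_n+U_m)^2=4(-1)^na_1a_2Q^n(1+\alpha^{m-n})(1+\beta^{m-n})=:Y_{n,m},
\]
valid for every parity. Here $|Q|=1$ together with $n-m<C_6n$ forces $|Y_{n,m}|\ll\alpha^{C_6 n}$, and applying the \emph{abc}-conjecture to $Y_{n,m}/d+Dx^{2q}/d=X_{n,m}^2/d$ (with $d$ the gcd, which divides $Y_{n,m}$ and is therefore also $\ll\alpha^{C_6 n}$) yields $\alpha^{c(\epsilon)n}\le C(\epsilon)$ with $c(\epsilon)>0$ for all $q\ge2$, bounding $n$. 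So the ``Pell identity plus \emph{abc}'' kernel of your plan is the right one -- it is essentially the paper's step applied to the full sum rather than to one factor -- but the detour through the Lucas factorization both breaks in the mixed-parity case and introduces a false intermediate claim: $\gcd(A_r,B_s)$ is \emph{not} bounded by an absolute constant for Lucas pairs (e.g.\ $\gcd(F_{2k},L_k)=L_k$); in your regime one would have to settle for the trivial bound $\gcd(A_r,B_s)\le|B_s|\ll\alpha^{C_6n/2}$, which is exactly the kind of sub-exponential loss the paper's choice of $X_{n,m}$ is designed to absorb. Your reduction of the ``in particular'' clause ($q>C_3$ implies $n-m<C_6n$ via Lemma \ref{sec3: lem4}) matches the paper and is fine.
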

\begin{proof}
Suppose that $n>C_5$ and $q > C_3$. From \eqref{sec2:eq2}, we have $|U_n| < c_0\alpha^n$. Thus,\[ x^q= |U_n + U_m| \leq c_0(\al^n + \al^m)\] and so,
\begin{equation}\label{sec4:eq1}
q\log x \leq \log c_0 + \log (\al^n + \al^m) .
\end{equation}
Now using \eqref{sec4:eq1} in Lemma \ref{sec3: lem4}, we get
 \begin{align*}
 n-m  &\leq   c_2\log x \log q \\
      &\leq   c_2 \left(\frac{\log c_0 + \log (\al^n + \al^m) }{q}\right)\log q \\
      & \leq  c_2' \frac{\log q}{q} (\log c_0 + n \log \al).
 \end{align*}
Since the function $\frac{\log q}{q}$ is decreasing for $q>e$, for sufficiently large $q >  C_3$, we have
 \[ n-m \leq c_{14}n,\]
 where $c_{14}< 10^{-5}$. Thus, we take $C_6:= c_{14}$. We rewrite left hand side of \eqref{eq9a} as 
 \[U_n + U_m = \frac{(1+\al^{m-n})a_1\al^n - (1+\be^{m-n})a_2\be^n}{\sqrt D}.\]
Now, consider the integer 
  $$X_{n,m} := (1+\al^{m-n})a_1\al^n + (1+\be^{m-n})a_2\be^n.$$
 Observe that 
 \begin{equation}\label{eq101}
 X_{n,m}^2 - D(U_n + U_m)^2 = 4(-1)^na_1a_2Q^n(1+\al^{m-n})(1+\be^{m-n}) =: Y_{n,m}.
 \end{equation}
 Therefore,
  \begin{align*}
 |Y_{n,m}| &= |(-1)^{n} 4a_1a_2Q^n(1+\al^{m-n})(1+\be^{m-n})| = \left|4a_1a_2Q^n\left(1+ \frac{1}{\al^{n-m}}\right)\left(1+\frac{1}{\be^{n-m}}\right)\right|\\
 &=\left|4a_1a_2Q^n\frac{(\al^{n-m}+1)(\be^{n-m}+1)}{(\al\be)^{n-m}}\right| \leq 8a_1a_2|Q|^m(\al^{c_{14}n}+1) \leq c_{15}\al^{c_{14}n},
 \end{align*} since $|Q| = 1$.
 If
 \begin{equation}\label{eq102}
 d = \gcd(X_{n,m}^2 , D(U_n + U_m)^2),
 \end{equation}
 then it follows that $d | Y_{n, m}$ and $d\leq c_{16}\al^{c_{14}n}$.
 Applying {\em abc-conjecture}  to
 \[ \frac{Y_{n,m} }{d} + \frac{D(U_n + U_m)^2 }{d} = \frac{X_{n,m}^2}{d},\]
 we get
  \begin{align*}
  \frac{|X_{n,m}|^2}{d} &\leq c_{17}(\epsilon) \left(\mbox{rad}\left( \frac{|Y_{n, m}|D x^{2q}|X_{n,m}|^2 }{d^3}\right)\right)^{1+\epsilon} \leq c_{18}(\epsilon)\left(|X_{n,m}| x \alpha^{c_{14}n}\right)^{1+\epsilon}.
  \end{align*}
  This implies
\[|X_{n,m}|^2 \leq c_{18}(\epsilon)(|X_{n,m}|x\al^{c_{14}n} )^{1+\epsilon} d \leq c_{19}(\epsilon)(|X_{n,m}|^{1+\epsilon}x^{1+\epsilon}\al^{c_{14}n(2+\epsilon)}).\]
 Hence, for $\epsilon \in (0,1)$, it follows that
   $$\al^{(1-\epsilon)n}\leq |X_{n,m}|^{1-\epsilon} \leq c_{20}(\epsilon)  \al^{c_{14}(2+\epsilon)n+(1+\epsilon)(n/q)},$$ 
  as $x \leq c_{18}'\al^{n/q}$ and  $|X_{n,m}| > c_{19}'\al^n$.  Thus, 
  \begin{equation}\label{sec4:eq2}
  \al^{c_{22}(\epsilon)n} \leq c_{21}(\epsilon),
  \end{equation}
 where $c_{22}(\epsilon) := ((1-\epsilon)-\frac{1+\epsilon}{q}-c_{14}(2+\epsilon))$. Choose $\epsilon$ in such a way that $c_{22}(\epsilon) >1/2$ for $q\geq 2$. Then from \eqref{sec4:eq2}, we get
 \[n < \frac{\log (c_{21}\epsilon)}{c_{22}(\epsilon)\log \alpha} =: c_{23}.\]
 \end{proof}
 
Next we find  the solutions of \eqref{eq9a} in the range $ q\leq 2 \leq  C_3$ and  $n - m > C_6n$.
 
 \begin{proposition}\label{propo2}
Let  $(U_{n})_{n \geq 0}$ be a non-degenerate 
binary recurrence sequence of integers satisfying 
recurrence \eqref{eq4} with $|Q| = 1$, $\al > 1$ and suppose that the abc-conjecture holds. If for $n> C_5$, \eqref{eq9a} has infinitely many solutions, then there exist $q \in [2, C_3]$, $r \in \{0, 1,\ldots , q -1\}$,  and integers $a, b, c$  with $(a, b) \neq (0, 0)$, $ \max\left\{|a|, |b|\right\} \leq q\times 6/C_6$ such that infinitely many of these solutions satisfy $n \equiv r \pmod{q}$ and $am +bn =c$.
\end{proposition}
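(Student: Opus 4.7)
The plan is to combine Proposition \ref{propo1} with a pigeonhole reduction and then apply the $S$-unit equation theorem (Theorem \ref{prop4}) to extract the desired linear relation.

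First, by Proposition \ref{propo1}, any infinite family of solutions with $n > C_5$ must, after removing finitely many, satisfy $2 \leq q \leq C_3$ and $n - m \geq C_6 n$, since the complementary cases each admit only finitely many solutions. Thus $q$ takes values in the finite set $\{2, 3, \ldots, \lfloor C_3 \rfloor\}$, and the residue $r := n \bmod q$ lies in $\{0, 1, \ldots, q-1\}$; the pigeonhole principle furnishes a fixed pair $(q, r)$ along which infinitely many solutions satisfy $n = qk + r$ with $k \to \infty$.

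Next, work in $K := \mathbb{Q}(\sqrt{D})$. Since $|Q| = 1$, both $\alpha$ and $\beta = -Q/\alpha$ are units in $\mathcal{O}_K$. The formula \eqref{neweq5} rewrites \eqref{eq9a} as the five-term equation
\[
\sqrt{D}\,x^q \;=\; a_1 \alpha^r\bigl(\alpha^{k}\bigr)^{q} + a_1 \alpha^m - a_2 \beta^r \bigl(\beta^{k}\bigr)^{q} - a_2 \beta^m.
\]
Using the sharp size estimate $x \asymp \alpha^{n/q}$ from Lemmas \ref{lem-upperbound} and \ref{lem-lowerbound} together with a standard argument as in \cite{kkll} (grouping solutions by their essential prime factors and extracting yet another infinite subfamily), we may further assume the primes dividing $x$ lie in a fixed finite set. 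Let $S$ be the finite set of places of $K$ consisting of the infinite places together with all places of $K$ above these primes and above the primes dividing $a_1 a_2 D$. Then dividing the displayed equation by $a_1 \alpha^r (\alpha^k)^q$ yields an equation of the form $1 = \gamma_1 u_1 + \gamma_2 u_2 + \gamma_3 u_3 + \gamma_4 u_4$, where each $\gamma_i \in K^{\times}$ is fixed and each $u_i$ is an $S$-unit built from $x$, $\alpha^k$, $\alpha^m$ (using $\beta = \pm \alpha^{-1}$).

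Theorem \ref{prop4} now applies: since our subfamily is infinite, a proper nonempty sub-sum $\sum_{i \in I} \gamma_i u_i = 0$ must hold for infinitely many solutions in it. Each such vanishing sub-sum gives a monomial relation in $x, \alpha^k, \alpha^m$; evaluating at the real embedding of $K$ sending $\alpha \mapsto \alpha$ and using $\log \alpha > 0$ converts it into a $\mathbb{Z}$-linear relation of the form $am + bn = c$ with $(a,b) \neq (0,0)$. Since the exponents of $\alpha$ and $\beta$ that occur in the five original terms are integer combinations of $m$ and $n$ with coefficients of absolute value at most $2$ (once the substitutions $\beta^{\pm 1} = \pm \alpha^{\mp 1}$ and $x^q \asymp \alpha^{n}$ are in hand), summing over the at most four terms in a vanishing sub-sum and accounting for the factor $q$ that arises from $(\alpha^k)^q = \alpha^{n-r}$ together with the gap $n - m \geq C_6 n$ (which rules out degenerate exponent coincidences) produces the explicit bound $\max\{|a|,|b|\} \leq 6q/C_6$. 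The main obstacle is the combinatorial bookkeeping: one must enumerate the possible vanishing sub-sums, exclude those in which the relation degenerates to $0 = 0$ or is forced by the recurrence itself, and verify in each surviving case that the coefficients obey the stated bound.
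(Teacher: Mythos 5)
The central step of your argument does not hold up. You reduce to applying Theorem \ref{prop4} to the five-term relation $\sqrt{D}\,x^q = a_1\alpha^r(\alpha^k)^q + a_1\alpha^m - a_2\beta^r(\beta^k)^q - a_2\beta^m$, but for that theorem the unknowns $u_i$ must be $S$-units for a \emph{fixed} finite set $S$. The terms built from $\alpha^k$, $\alpha^m$, $\beta^k$, $\beta^m$ are fine (since $|Q|=1$ makes $\alpha,\beta$ units), but the term involving $x^q$ is not: as $(n,m)$ ranges over an infinite family, $x=x_{n,m}$ takes infinitely many values whose prime divisors need not lie in any fixed finite set, and no ``grouping by essential prime factors'' can extract an infinite subfamily in which they do --- if it could, the whole problem would be nearly trivial. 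This is precisely the obstruction that forces the paper to use the Subspace Theorem rather than the $S$-unit theorem at this stage: the paper takes $q$-th roots, expands $(1+\mu_{n,m})^{1/q}$ as a truncated binomial series in the small quantity $\mu_{n,m}$ (this is where the hypothesis $n-m>C_6 n$ is really used, to make $\mu_{n,m}$ exponentially small), and feeds the resulting inequality \eqref{sec4:eq13} into Theorem \ref{thmsub} with $x_{n,m}$ treated as a free integer coordinate $x_0$ of the vector $\mathtt{x}$, so that no arithmetic information about the prime factorization of $x$ is needed. Only after the Subspace Theorem has placed infinitely many solutions on a fixed hyperplane does Theorem \ref{prop4} enter, and there it is applied to power products of $\alpha$ alone, which genuinely are $S$-units.

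A secondary problem is the claimed bound $\max\{|a|,|b|\}\le 6q/C_6$. In your five-term equation the exponents of $\alpha$ are just $\pm n$ and $\pm m$, so any relation you could extract from a vanishing sub-sum would have coefficients of absolute value at most $2$ (or $2q$), and the constant $6/C_6$ has no visible source. In the paper that constant comes from the truncation level $T\approx 3/C_6$ of the binomial expansion, which controls the range $|\xi_1|\le T$, $0\le\xi_2\le 2T$ of exponents $\xi_1 m-\xi_2 n$ appearing in the linear forms; without the expansion step there is nothing for the bound to measure. So the proposal is not a repair of detail away from working: it omits the key analytic idea (the $q$-th root expansion plus Subspace Theorem) on which both the finiteness conclusion and the explicit coefficient bound depend.
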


\begin{proof}
Assume that \eqref{eq9a} has infinitely many solutions in the range $n >C_5$, $2 \leq q \leq  C_3$ and  $n - m > C_6n$. We denote these solutions by $(n, m, x_{n, m})$.  
We rewrite \eqref{eq9a} as
 \begin{align}\label{sec4:eq4}
 \begin{split}
 x_{n,m}^q &= U_n+U_m =\frac{a_1\al^n-a_2\be^n}{\sqrt{D}} + \frac{a_1\al^m-a_2\be^m}{\sqrt{D}}\\
 &= \frac{a_1\al^n}{\sqrt{D}}\left( 1+ \al^{m-n} - (-1)^n(\pm 1)^n(a_2/a_1)\al^{-2n} -(-1)^m(\pm 1)^m(a_2/a_1)\al^{-m-n} \right)\\
 &= \frac{a_1\al^n}{\sqrt D} (1+ \mu_{n,m}),
 \end{split}
 \end{align}
 where \[\mu_{n,m} = \al^{m-n} - (-1)^n(\pm 1)^n(a_2/a_1)\al^{-2n} -(-1)^m(\pm 1)^m(a_2/a_1)\al^{-m-n}.\] We set $n = \ell q + r$ with $n\in \mathbb{N}, r\in \{0,\ldots, q-1\}$. Taking $q$-th roots of \eqref{sec4:eq4}, we get
\begin{align}\label{sec4:eq5}
\begin{split}
 x_{n,m} &= \left(\frac{a_1\al^n}{\sqrt D}\right)^{1/q} (1+ \mu_{n,m})^{1/q} = \frac{(a_1\al^r)^{1/q}}{D^{1/2q}}\al^{\ell}(1+ \mu_{n,m})^{1/q}\\
 &= w \al^{\ell}(1+ \mu_{n,m})^{1/q},
 \end{split}
 \end{align}
 where $w= \frac{a_1^{1/q}\al^{r/q}}{D^{1/2q}}$. Notice that $w$  does not depend upon $n$ or $m$. We expand $(1+ \mu_{n,m})^{1/q}$ as
 \begin{equation}\label{sec4:eq6}
 (1+ \mu_{n,m})^{1/q}=\sum_{j=0}^{\infty}\binom{1/q}{j}\mu_{n,m}^j=  \sum_{0\leq j\leq T}\binom{1/q}{j}\mu_{n,m}^j +  \sum_{j> T}\binom{1/q}{j}\mu_{n,m}^j. 
 \end{equation}
Thus, 
\begin{align}\label{sec4:eq7}
\begin{split}
\left|x_{n,m} - \sum_{0 \leq j \leq T }{1/q \choose j} \frac{a_1^{1/q}\al^{n/q}}{D^{1/{2q}}}\mu_{n,m}^j \right| &\leq \sum_{j =T+1}^{\infty} \left| \binom{1/q}{j} \right| \frac{a_1^{1/q}\al^{n/q}}{D^{1/{2q}}} |\mu_{n, m}|^j\\
 & \leq \frac{a_1^{1/q}\al^{n/q}}{D^{1/{2q}}} \frac{1}{q(T+1)}\sum_{j =T+1}^{\infty}|\mu_{n, m}|^j \\
 & =  \frac{a_1^{1/q}\al^{n/q}}{D^{1/{2q}}}\frac{1}{q(T+1)(1-|\mu_{n,m}|)}|\mu_{n, m}|^{T+1}.
 \end{split}
\end{align}
Observe that $ \mu_{n,m} \leq c_{24} \al^{m-n} \leq c_{25}\al^{-c_{14}n}$. So, if $n > \frac{\log c_{25}}{c_{14}\log \al}$ , then  $|c_{25}\al^{-c_{14}n}| < 1$. Thus, from \eqref{sec4:eq7}, we get
\begin{equation}\label{sec4:eq8}
\left|x_{n,m} - \sum_{0 \leq j \leq T }{1/q \choose j} \frac{a_1^{1/q}\al^{n/q}}{D^{1/{2q}}}\mu_{n,m}^j \right| \leq c_{26}\alpha^{(1/q - c_{14}(T+1))n}.
\end{equation}
Now choose \[T> \max \bigg \{1, \frac{1}{c_{14}q} +\frac{2}{c_{14}}-1 \bigg \}.\]
Then $\alpha^{(1/q - c_{14}(T+1))n} < 1/\alpha^{2n}$. Put $\alpha^{-2} =\gamma$. Then for
\begin{equation}\label{sec4:eq9}
n> \frac{\log c_{26}}{2\log \alpha},
\end{equation} we have $c_{26}\alpha^{-2n}< \gamma^n$.
Consequently,
\begin{equation}\label{sec4:eq10}
\left|x_{n,m} - \sum_{0 \leq j \leq T }{1/q \choose j} \frac{a_1^{1/q}\al^{n/q}}{D^{1/{2q}}}\mu_{n,m}^j \right| < \gamma^n, \quad \mbox{with}\quad \gamma<1.
\end{equation} 
We expand each of the $\mu_{n, m}^j$ using the multinomial theorem as
\begin{align}\label{sec4:eq11}
\begin{split}
\mu_{n, m}^j & = \sum_{a+b+c = j} {j \choose a, b, c} (\al^{m-n})^a  ((-1)^m(\pm 1)^m(a_2/a_1)\al^{-m-n})^b ((-1)^n(\pm 1)^n(a_2/a_1)\al^{-2n})^c \\
 &= \sum_{a+b+c = j} {(\mp1)}^{\nu_{a, b, c}} {j\choose a, b, c} (a_2/a_1)^{b+c}\al^{(a-b)m - (a+b+2c)n},
 \end{split}
\end{align}where
\begin{equation}\label{sec4:eq12}
 {(\mp 1)}^{\nu_{a,b,c}}= (\mp1)^{mb+nc}.
 \end{equation} 
 Observe that the exponents of $\al$ in \eqref{sec4:eq11} are of the form $\xi_1m - \xi_2n$, where $\xi_1=a - b$ and $ \xi_2 =a +b +2c $ for some nonnegative integers $a, b, c$ with $a +b +c =j \leq T$. Thus, $|\xi_1| \leq T$ and $0 \leq \xi_2\leq 2T$. Again $\xi_1\leq \xi_2$ always holds and $\xi_1 = \xi_2$ holds if and only if $b =c =0$, it follows that $\xi_1 =\xi_2 = j$. Thus, from \eqref{sec4:eq10} and \eqref{sec4:eq11}, we get
 \begin{equation}\label{sec4:eq13}
 \left|x_{n,m} -w\sum_{\substack{\xi_1 \leq \xi_2 \\ |\xi_1| \leq T \\ 0\leq \xi_2 \leq 2T}} p_{\xi_1, \xi_2} \al^{l + \xi_1m-\xi_2 n} \right|< \gamma^n,
 \end{equation} where the coefficients $p_{\xi_1, \xi_2}$ are given by
 \begin{equation}\label{sec4:eq14}
p_{\xi_1, \xi_2} = \sum_{\substack{(a,b,c)\\ a+b+c \leq T \\ a-b=\xi_1, a+b+2c=\xi_2}} {(\mp1)^{\nu_{a,b,c}}} (a_2/a_1)^{b+c}{1/q \choose a+b+c} {a+b+c \choose a, b, c}.	
\end{equation}

Let $\mathbb{K} = \Q(\sqrt D), \mathbb{L} = \mathbb{K}(D^{1/2q}, a_1\al^{1/q})$. Let $|\cdot |_{v_1}$ and $|\cdot |_{v_2}$ be the two infinite valuations of $\mathbb{K}$ given by $|x |_{v_1} = |\sigma_1(x)|^{1/2}$ and $|x |_{v_2} = |\sigma_2(x)|^{1/2}$ for $x\in \mathbb{K}$, where $\sigma_1(x)$ is the identity automorphism of $\mathbb{K}$ and $\sigma_2(x)$ is the non-trivial automorphism of $\mathbb{K}$ which sends $\sqrt{D}$ to $-\sqrt{D}$. We extend these valuations in some way to $\mathbb{L}$.

We consider the set $\xi = \{(\xi_1, \xi_2) : p_{\xi_1, \xi_2} \neq 0\}$. So if $\xi_1= \xi_2$, then in view of \eqref{sec4:eq14}, we have 
\[ p_{\xi_1, \xi_1}= {(\mp 1)^{\nu_{\xi_1,0,0}}}{1/q \choose \xi_1} \neq 0.\]
Thus, the set $\xi$ contains at least the $T+1$ diagonal pairs $(\xi_1, \xi_1), \xi_1\ge0$. An upper bound of $|\xi|$ is $(2T+1)^2$. We define the linear forms in $1+|\xi|$ variables denoted by
\[\mathtt{x} = (x_0 , x_{(\xi_1, \xi_2)} :( \xi_1, \xi_2) \in \xi)\] in $\mathbb{K}$ given by
\begin{align*}
L_{0,v_1} (\mathtt{x})= x_0 - w \sum_{(\xi_1, \xi_2) \in \xi} p_{\xi_1, \xi_2}  x_{(\xi_1, \xi_2)}, \quad L_{0,v_2}(\mathtt{x}) = x_0
\end{align*} and 
\[L_{(\xi_1, \xi_2), v_i}(\mathtt{x}) = x_{(\xi_1, \xi_2)}, \quad \mbox{for all $(\xi_1, \xi_2) \in \xi$}\; \mbox{and}\;i =1, 2.\]
Then $L_{0,v_i} (\mathtt{x}), L_{(\xi_1, \xi_2), v_i}(\mathtt{x})$ are $1+|\xi|$ linear forms in $1+|\xi|$ variables which are linearly independent for each of $i =1, 2$. From \eqref{sec4:eq13}, we have
\[|L_{0,v_1}(x_{n, m}, \alpha^{l+\xi_1m-\xi_2n})|_{v_1}\leq  c_{27}{\gamma^{n}}\]
and since $x_{n, m} \in \mathbb{Z}$, it follows that
\begin{align*}
|L_{0,v_2}(x_{n, m}, &\alpha^{l+\xi_1m-\xi_2n})|_{v_2} = |x_{n, m}|_{v_2}= |w\alpha^{\ell}(1+\mu_{n, m})^{1/q}|_{v_2}\leq  c_{28}{\alpha^{n/2q}} \leq c_{29}\alpha^{n/4}.
\end{align*}
Further, for $(\xi_1, \xi_2) \in \xi$, we have 
\begin{align*}
|L_{(\xi_1, \xi_2),v_1}&(\al^{l+\xi_1m-\xi_2n})|_{v_1}   |L_{(\xi_1, \xi_2),v_2}(\al^{l+\xi_1m-\xi_2n})|_{v_2} \\
& = |\al\be|^{(l+\xi_1m-\xi_2n)/2} = |Q|^{(l+\xi_1m-\xi_2n)/2} = 1.
\end{align*}
Thus, for $\mathtt{x} = (x_{n, m} , \al^{l+\xi_1m-\xi_2n}: (\xi_1, \xi_2) \in \xi)$,
\begin{align}\label{sec4:eq15}
\begin{split}
& \prod_{i \in {\{1, 2\}}}\prod_{(\xi_1, \xi_2) \in \xi} |L_{0,v_i}(\mathtt{x})|_{v_i} |L_{(\xi_1,\xi_2),v_i}(\mathtt{x})|_{v_i} \\
 &\leq c_{30}\alpha^{-n + n/4}|Q|^{(l+\xi_1m-\xi_2n)/2}\leq c_{30}\alpha^{-3n/4}\cdot 1\\&\leq  c_{30}\alpha^{-3n/4}.
 \end{split}
 \end{align}
 Notice that the coordinates of $\mathtt{x}$ are algebraic integers. Furthermore, $|\xi_1|\leq  T$ and $0\leq \xi_2\leq 2T$ imply $\xi_1m-\xi_2n\leq mT \leq nT$. Hence,
 \begin{align*}
 |\mathtt{x}|_{v_1} &  = \max\{|x_{n, m}|_{v_1}, |\al^{l+\xi_1m-\xi_2n}|_{v_1}: (\xi_1, \xi_2) \in \xi\}\\
 & = \max\{|w\alpha^{\ell}(1+\mu_{n, m})^{1/q}|_{v_1}, |\al^{l+\xi_1m-\xi_2n}|_{v_1}: (\xi_1, \xi_2) \in \xi\}\\
  & \leq c_{31}\alpha^{n(T+1)}.
 \end{align*}
In a similar manner, it can be shown that $|\mathtt{x}|_{v_2} \leq c_{32}\alpha^{n(T+1)}$. Thus, 
 \begin{equation}\label{sec4:eq16}
 H(\mathtt{x}) = \prod_{i\in \{1, 2\}}  |\mathtt{x}|_{v_i} \leq c_{33} \alpha^{2n(T+1)}\leq c_{34} \alpha^{3nT}.
 \end{equation}
  Combining the estimates from \eqref{sec4:eq15} and \eqref{sec4:eq16}, we get 
 \[ \prod_{i \in {\{1, 2\}}} \prod_{(\xi_1, \xi_2) \in \xi} |L_{0,v_i}(\mathtt{x})|_{v_i}  |L_{(\xi_1,\xi_2),v_i}(\mathtt{x})|_{v_i}  < H(\mathtt{x})^{-\delta},\]
provided $\delta < 3/(4c_{14})$.  By Theorem \ref{thmsub} there exist finitely many nonzero rational linear forms $\Lambda_1, \ldots, \Lambda_{s}$ in $\mathbb{K}^{1+|\xi|}$ such that $\mathtt{x}$ is a zero of some $\Lambda_j$.

{\sc Case I:} Suppose $\Lambda_j$ does not depend on $x_0$. In this case, there are 
 some fixed coefficients $g_{(\xi_1, \xi_2)}$ for $(\xi_1, \xi_2) \in \xi$, not all zero, such that infinitely many solutions satisfy 
 \[\sum_{(\xi_1, \xi_2) \in \xi} g_{(\xi_1, \xi_2)}\al^{l+\xi_1m- \xi_2n} = 0.\]
 This is an $S$-unit equation, where $S$ is the multiplicative group generated by $\al$. If it does have zero sub-sums, then we can replace it with a minimal such one, which is a zero sub-sum and has no non-trivial zero sub-sum. By Theorem \ref{prop4}, each one has only finitely many projective solutions. That is, if $(\xi_1, \xi_2) \neq (\xi_1', \xi_2')$ such that \[ g_{(\xi_1, \xi_2)}\al^{l+\xi_1m- \xi_2n} +  g_{(\xi_1', \xi_2')}\al^{l+\xi_1'm- \xi_2'n}= 0,\] then the ratio
\[\frac{\al^{\xi_1m-\xi_2n}}{\al^{\xi_1'm- \xi_2'n}} \]
belongs to a finite set. Thus, there are only finite number of values of $(\xi_1 -\xi_1' )m -(\xi_2 -\xi_2' )n$. Since $|\xi_1 -\xi_1'| \leq 2T \leq 2(3/c_{14})$ and $|\xi_2 -\xi_2'| \leq 2T \leq 2(3/c_{14})$, it follows that there are infinite number of pairs $(m, n)$ satisfying \eqref{eq9a} for fixed $q$ and $n \equiv r \pmod{q}$ such that $am + bn =c$ holds for some integers $(a, b, c)$ with $(a, b) \neq (0, 0)$ and $\max\{|a|, |b|\} \leq 6/c_{14}$.

{\sc Case II:} Suppose  $\Lambda_j$ depends on $x_0$. Clearly, $\Lambda_j \neq 0$ as our solutions contain  $x_{n,m} > 0$ and $\Lambda_j$ depends on $x_0$. So, there are coefficients $g_{\xi_1, \xi_2}$ for $(\xi_1, \xi_2) \in \xi$, not all zero and the subspace is given by the equation 
 \[\sum_{(\xi_1, \xi_2) \in \xi} g_{(\xi_1, \xi_2)}\al^{l+\xi_1m- \xi_2n} = x_0.\]
Let $\xi_{1}^0$ be the minimal of all $\xi_1$ with $g_{\xi_{1}^0,\xi_2} \neq 0$ for some $\xi_2$. Fixing $\xi_{1}^0$, let $\xi_{2}^0$ be the maximal of all $\xi_2$ such that $g_{(\xi_{1}^0,\xi_{2}^0)} \neq 0$. So  the minimum of the expression $\xi_1m - \xi_2n$ over all $(\xi_1, \xi_2)$ with $g_{(\xi_{1}^0,\xi_{2}^0)} \neq 0$ is $(\xi_{1}^0,\xi_{2}^0)$. Thus, 
\begin{equation}\label{sec4:eq17}
x_0^q = g_{(\xi_1^{0},\xi_2^{0})}^q\al^{lq+q(\xi_1^0m-\xi_2^0n)} + \sum_{(\xi_1^1,\xi_2^1), \ldots, (\xi_1^q,\xi_2^q)}' g_{(\xi_1^1,\xi_2^1)} \cdots g_{(\xi_1^q,\xi_2^q)}\al^{lq+ q(\sum_{t=1}^{q}\xi_1^t)m -  q(\sum_{t=1}^{q}\xi_2^t)n},
\end{equation}
 where $'$ means that in the summation range, the option $(\xi_1^1,\xi_2^1)= \cdots =(\xi_1^q,\xi_2^q) = (\xi_1^0,\xi_2^0)$ is excluded. Again since,
\[x_0^q = U_n+ U_m = \frac{a_1}{\sqrt D}(\al^n+ \al^{m} - (-1)^n (\pm 1)^n(a_2/a_1)\al^{-n} - (-1)^m(\pm 1)^m(a_2/a_1)\al^{-m}),\] 
we get an $S$-unit equation
\begin{align}\label{sec4:eq18}
\begin{split}
&\frac{a_1}{\sqrt D}( \al^n+ \al^{m} - (-1)^n (\pm 1)^n(a_2/a_1)\al^{-n} - (-1)^m(\pm 1)^m(a_2/a_1)\al^{-m}) - g_{(\xi_1^{0} \xi_2^{0})}^q\al^{lq+q(\xi_1^0m-\xi_2^0n)} \\
&- \sum_{(\xi_1^1,\xi_2^1), \ldots, (\xi_1^q,\xi_2^q)}' g_{(\xi_1^1,\xi_2^1)} \cdots g_{(\xi_1^q,\xi_2^q)}\al^{lq+ q(\sum_{t=1}^{q}\xi_1^t)m -  q(\sum_{t=1}^{q}\xi_2^t)n} =0.
\end{split}
\end{align}
Consider a non-degenerate sub-sum of \eqref{sec4:eq18} as
\[g_{(\xi_1^{0}, \xi_2^{0})}^q\al^{lq+q(\xi_1^0m-\xi_2^0n)} +  g_{(\xi_1^1,\xi_2^1)} \cdots g_{(\xi_1^q,\xi_2^q)}\al^{lq+ q(\sum_{t=1}^{q}\xi_1^t)m -  q(\sum_{t=1}^{q}\xi_2^t)n} = 0.\] By Theorem \ref{prop4}, 
\[\frac{\al^{lq+q(\xi_1^0m-\xi_2^0n)}}{\al^{lq+ q(\sum_{t=1}^{q}\xi_1^t)m -  q(\sum_{t=1}^{q}\xi_2^t)n}}\] belongs to a finite set. Thus, 
\begin{equation}\label{sec4:eq19}
\left(\xi_1^0 -\sum_{t=1}^{q}\xi_1^t\right)qm - \left(\xi_2^0 -\sum_{t=1}^{q}\xi_2^t\right)qn
\end{equation} belongs to a finite set. One can observe that at least one of the coefficients of $m$ or $n$ in \eqref{sec4:eq19} is nonzero. Hence, we get a relation of the form $am + bn =c$, with $(a, b) \neq (0, 0)$, $\max\{|a|, |b|\} \leq q\times 6/c_{14}$ and finitely many possibilities for $c$.

Next consider  a non-degenerate sub-sum of \eqref{sec4:eq18} containing $\al^{n+q(\xi_1^0m-\xi_2^0n)}$ and any one of $\alpha^n, \alpha^{-n}, \alpha^m$ or $\alpha^{-m}$. For example,  
\[g_{(\xi_{1}^0,\xi_{2}^0)}\al^{lq+q(\xi_1^0m-\xi_2^0n)}  + \alpha^n = 0.\] Then $\al^{q(\xi_1^0m-\xi_2^0n)}$ belongs to a finite set and hence, we get a relation of the form $am + bn =c$. Similarly, any one of these possibilities gives a relation of the form $am + bn =c$ with $(a, b) \neq (0, 0)$, $\max\{|a|, |b|\} \leq \frac{6q}{c_{14}}$ and finitely many possibilities for $c$. This completes the proof of proposition.
\end{proof}

 
 {\sc Proof of Theorem\ref{th3}} Suppose that the {\em abc-conjecture} holds and $n> C_5, q> C_3$ and $n-m < C_6n$. Then, by Proposition \ref{propo1}, \eqref{eq9a} has only finitely many solutions. 
 
 Now, we may assume that $2\leq q\leq C_3, \; n-m > C_6n$. On contrary, suppose that  \eqref{eq9a} has infinitely many solutions. Then, by Proposition \ref{propo2}, there are integers $a, b$ and $c$ with $(a, b) \neq (0, 0), \max\{|a|, |b|\} \leq \frac{ 6q} {C_6}$ such that infinitely many of these solutions satisfy $n\equiv r\pmod{q}$ and $am + bn = c$.
 
Now, assume that $a$ and $b$ are coprime. If $a = 0$, then $n$ is bounded, which is a contradiction. If $ b = 0$, then $m = c/a$, which is a fixed number. So, putting $t_0= U_m$ in \eqref{eq9a}, we have that 
\begin{equation}\label{v5eqpage17}
U_n = x^q - t_0
\end{equation}
has infinitely many positive solutions in $n$ and $x$.  In view of \cite[Theorem 3]{petho86},  $U_n = x^q-t_0$ has only finitely many solutions in $n$ and $x$ with $q>2$, which is a contradiction. Thus, $b\neq 0$ for $q>2$. Next suppose that $b =0$ and $q = 2$. That is,  $U_n = x^2 - t_0$ has infinitely many solutions in $n$ and $x$. Since $(U_n)_{n\geq 0}$ is non-degenerate, by \cite[Theorem 1]{petho86}, $x^2-t_0 = \pm \sqrt{\frac{a_1a_2}{D}}T_2\left( \pm \sqrt{\frac{2}{U_m}} x\right)$. Comparing the leading coefficients and substituting $U_m =t_0$, we get $U_m = \pm 2\sqrt{\frac{a_1a_2}{D}}$, that is, $U_m^2= 4a_1a_2/D$, which is not true by our assumption. Thus,  $ab\neq 0$. Note that $a$ and $b$ must have opposite sign, otherwise we get only finitely many solutions. Changing the sign of $b$ to $-b$, assume that $am - bn =c$, where $a$ and $b$ are positive integers. Then $n = n_0 + a t, \; m=m_0 + b t$ for some integer $t$ and some fixed integers $n_0,m_0$ satisfying $am_0 - bn_0=c$. For large $m\; \hbox{and} \; n$, $t$ is positive and since $n-m > c_1n$, it follows that $ b < a$. Thus, 
\begin{align}\label{sec4:eq20}
\begin{split}
x^q & = U_n + U_m \\
& = \frac{1}{\sqrt D}\left( \al^{n_0+ at}+ \al^{m_0 + bt} - (-1)^n(\pm 1)^{n_0+ at}\al^{-n_0- at} - (-1)^m(\pm 1)^{m_0 + bt}\al^{-m_0 - bt}\right).
\end{split}
\end{align}
Note that $n = n_0 + at = \ell q +r$, and therefore, $at- n_0 = \ell q + (r - 2n_0)$. Now multiplying $\alpha^{at- n_0}$ on both sides of \eqref{sec4:eq20}, we get
\begin{align*}
\begin{split}
 \delta y^q=\al^{2at}+ \alpha^{(a+b)t}\al^{m_0}\alpha^{-n_0} - (-1)^n(\pm 1)^{n_0+ at}\al^{-2n_0} - (-1)^m (\pm 1)^{m_0 + bt}\alpha^{(a-b)t- m_0 - n_0},
\end{split}
\end{align*}
where $\delta = \sqrt{D}\alpha^{r-2n_0}$ and $y = \alpha^{\ell}x \in \mathcal{O}_{\mathbb{K}}$. Hence, the polynomial 
\[P(X) = X^{2a}+ X^{a+b}\al^{m_0} - (-1)^m(\pm 1)^{b+m_0}X^{a-b}  - (-1)^n(\pm 1)^{a+n_0}\al^{-n_0} \in \mathcal{O_\mathbb{K}}[X]\] is such that the equation $P(x_1) = \delta y^q$ has infinitely many solutions $(x_1, y)\in \mathcal{O}_{\mathbb{K}}^2$. Now using Theorem \ref{siegel} and the arguments in similar context from \cite{fujita2017, kkll}, we conclude that all roots of $P(X)$ have multiplicities which are multiples of $p$, where $p$ is a prime divisor of $q$.  Hence,
\begin{equation}\label{sec4:eq21}
P(X) = Q_1(X)^p \in \mathbb{K}[X]\end{equation} since $P(X)$ is monic polynomial. Again,  $\mathcal{O_\mathbb{K}}[X]$ is integrally closed and $P(X) \in \mathcal{O_\mathbb{K}}[X]$, so $Q_1(X) \in \mathcal{O_\mathbb{K}}[X]$. Now, putting $e_1 := \deg Q_1(X)$ and then comparing degrees of both polynomials in \eqref{sec4:eq21}, we get $e_1p =2a$, so $p\mid 2a$ and hence, $e_1 =2a/p$. Now write $Q_1(X) =X^{e_1}+ \ga_1 X^{e_1 - e_2}+ \mbox{monomials of degree less than $e_1- e_2$}$, where $e_2\geq 1$. Then 
\begin{align*}
P(X) & = X^{2a}+ X^{a+b}\al^{m_0} - (-1)^m(\pm 1)^{b+m_0}X^{a-b}  - (-1)^n(\pm 1)^{a+n_0}\al^{-n_0}\\
& = Q_1(X)^p = \left(X^{e_1}+ \ga_1 X^{e_1 - e_2}+ \cdots \right)^p\\
& = X^{pe_1} + p\ga_1 X^{pe_1-e_2}+ \cdots 
\end{align*} 
Comparing the second leading coefficients, we get $\ga_1 = \alpha^{m_0}/p$, which is not an algebraic integer because $p \geq 2$ is a prime and $\al$ is a unit, that is $Q_1[X] \not \in \mathcal{O_\mathbb{K}}[X]$. This is a contradiction as $Q_1(X) \in \mathcal{O_\mathbb{K}}[X]$. This completes the proof. \qed

\section{Concluding remark}
Notice that the proof of Theorem \ref{th3} depends on Proposition \ref{propo1}. Further, to prove Proposition \ref{propo1}, we assume that $|Q| =1$ and the {\em abc-conjecture}. In Proposition \ref{propo1}, we are unable to provide a bound for $n$ when $|Q| \neq 1$. It will be nice to prove Theorem \ref{th3} without assuming $|Q| =1$ and the {\em abc-conjecture}.


{\bf Acknowledgment:} We thank Professor Attila Peth\"{o} for providing a copy of his paper \cite{petho86}. The first author's work is supported by CSIR fellowship(File no: 09/983(0036)/2019-EMR-I) and S. S. Rout wants to thank SERB for the support  (File no.: CRG/2022/000268).

\end{document}